\title[H\"ormander condition for delayed SDEs]
      {The H\"ormander condition for delayed stochastic differential equations}
\author{Reda   \textsc{Chhaibi}}
\address{Universit\'e Paul Sabatier, Toulouse 3 -- Institut de math\'ematiques de Toulouse (IMT) -- 118, route de Narbonne, 31400, Toulouse, France}
\email{reda.chhaibi@math.univ-toulouse.fr}
\date{\today}
\author{Ibrahim \textsc{Ekren}}
\address{Department of Mathematics -- Florida State University -- 1017 Academic Way,
Tallahassee, FL 32306, USA
}
\email{iekren@fsu.edu}
\DeclareMathOperator{\id}{id}
\def\half{\frac{1}{2}}
\def\third{\frac{1}{3}}
\def\1{{\mathbf 1}}
\def\pa{\partial}
\def\a{\alpha}
\def\d{\delta}
\def\O{\Omega}
\def\ve{\varepsilon}
\def\N{{\mathbb N}}
\def\R{{\mathbb R}}
\def\W{\mathbb{W}}
\def\D{\mathbb{D}}
\def\P{{\mathbb P}}
\def\E{{\mathbb E}}
\def\X{{\mathbb X}}
\def\Cc{{\mathcal C}}
\def\Dc{{\mathcal D}}
\def\Fc{{\mathcal F}}
\def\Lc{{\mathcal L}}
\def\Mc{{\mathcal M}}
\def\Nc{{\mathcal N}}
\def\Pc{{\mathcal P}}
\def\Rc{{\mathcal R}}
\def\Vc{{\mathcal V}}
\def\Wc{{\mathcal W}}
\def\xb{{\mathbf x}}
\newtheorem{thm}{Theorem}[section]
\newtheorem{proposition}[thm]{Proposition}
\newtheorem{definition}[thm]{Definition}
\newtheorem{lemma}[thm]{Lemma}
\newtheorem{rmk}[thm]{Remark}
\newtheorem{assumption}[thm]{Assumption}
\numberwithin{equation}{thm}
\newcommand{\ba}{\begin{array}}
\newcommand{\ea}{\end{array}}
\newcommand{\be}{\begin{equation}}
\newcommand{\ee}{\end{equation}}
\newcommand{\bea}{\begin{eqnarray}}
\newcommand{\eea}{\end{eqnarray}}
\newcommand{\beaa}{\begin{eqnarray*}}
\newcommand{\eeaa}{\end{eqnarray*}}
\newcommand{\ignore}[1]{}
\newcommand{\vertiii}[1]{{\left\vert\kern-0.25ex\left\vert\kern-0.25ex\left\vert #1 
    \right\vert\kern-0.25ex\right\vert\kern-0.25ex\right\vert}}
\begin{document}

\begin{abstract}
In this paper, we are interested in path-dependent stochastic differential equations (SDEs) which are controlled by Brownian motion and its delays. Within this non-Markovian context, we give a H\"ormander-type criterion for the regularity of solutions. Indeed, our criterion is expressed as a spanning condition with brackets. A novelty in the case of delays is that noise can ``flow from the past'' and give additional smoothness thanks to semi-brackets.

The proof follows the general lines of Malliavin's probabilistic proof, in the Markovian case. Nevertheless, in order to handle the non-Markovian aspects of this problem and to treat anticipative integrals in a path-wise fashion, we heavily invoke rough path integration.
\end{abstract}
\keywords{H\"ormander-type criterion, Malliavin calculus, Delayed stochastic differential equation, Rough path integration}

\maketitle

\medskip
\medskip
\medskip
\hrule
\tableofcontents
\hrule
\newpage

\section{Introduction}

This is a first paper on the general question of smoothness for marginals of solutions to non-Markovian SDEs. Here, we fix a time maturity $T>0$ and  $h = \left( h_i \right)_{0 \leq i \leq N-1} \in \R_+^N$ is an increasing sequence of delays satisfying:
\begin{align}
\label{eq:delays}
0 = & \ h_0 < h_1 < h_2 < \dots < h_{N-1} < T \ .
\end{align}
We also fix $m,d>0$ two integers and consider the random variable $X_T \in \R^d$ where $\left( X_t \right)_{0 \leq t \leq T}$ is the solution to a delayed SDE:
\begin{align}
\label{eq:sde_def}
X_t & = X_0 + \sum_{k=0}^m \int_0^t V_k(r,X)  \circ dW^k_r \ , \mbox{ for all }t\geq 0\\
X_t & = X_0 \ ,\mbox{ for all }t\leq 0 \ .
\end{align}
The process $(W^k)_{1\leq k\leq m}$ is an $m$-dimensional Brownian motion. The vector fields are of the form
$$V_k = V_k(t,X) = V_k\left( t, X_{t},X_{t-h_1},\ldots X_{t-h_{N-1}} \right) \in \R^d \ ,$$
and depend smoothly on delayed values of the path $X$. $N=1$ recovers the usual Markovian setting of diffusions. By convention, the additional index $k=0$ will refer to time and $W_t^0=t$. Notice that the Stratonovich stochastic integration $\circ dW^k_\cdot$ is a priori ill-defined as the integrand has no reason to be a semi-martingale. Recall that by definition, Stratononich integration is usual given in the setting of two continuous semi-martingales $(X,Y)$ by:
\begin{align*}
X_t \circ dY_t & = X_t dY_t + \half d \langle X, Y \rangle_t \ ,
\end{align*}
where $X_t dY_t$ is defined by Itô integration and $\langle X, Y \rangle_t$ is the covariation bracket which may not exist outside of the framework of semi-martingales.

At this point, we leave to later the discussion regarding which theory of stochastic integration is invoked. Here is a simple example showing that delays take us out of the usual framework: in the natural filtration $\Fc_t := \sigma\left( W_s ; s \leq t \right)$, $X_t = W_{t-h_1}$ itself cannot be a semi-martingale. Indeed, assume the semi-martingale decomposition $X_t = M_t + A_t$ such that $M$ is a local martingale and $A$ adapted with locally finite total variation. Upon localizing thanks to stopping times, we can assume that $M$ is a bounded martingale and that $A$ has bounded total variation. Now because of the delays, and Brownian motion having almost surely infinite total variation, necessarily $X$ has infinite mean variation
$$ MV_t := \sup_{0 < t_1 < t_2 < \dots < t}
           \left| \sum_k |\E \left[ X_{t_{k+1}} - X_{t_{k}}| \Fc_{t_k} \right]| \right|
         = \infty$$
for all $t>0$ and therefore the finite variation compensator $A$ cannot exist.

Let us now go back to discussing the topic of regularity. The Markovian setting i.e when $V_k = V_k(t,X_t)$ has a beautiful answer in the form of H\"ormander's spanning condition \cite{hormander1967}. In that paper, H\"ormander states that $X_T$ has a smooth density if certain Lie brackets between the vector fields $\left( V_k \right)_{0 \leq k \leq d}$ span $\R^d$. Of course, the language of H\"ormander was functional analysis and PDEs. The translation from probability to PDEs is readily obtained upon invoking the fact densities are the fundamental solutions to the forward Fokker-Planck PDE. Malliavin's proof pushed further by giving a probabilistic approach. We recommend \cite{Hairer11} for a comprehensive review.


Although everything can be recast into the Itô convention, we choose to work under the Stratonovich convention. This choice is not innocuous. Indeed, it is well-known that the Stratonovich reformulation in terms of vector fields is the right language for ``geometric'' arguments (see for e.g \cite{Hsu}). The H\"ormander condition itself is very geometric by nature, since it morally says that heat dissipates along the vector fields $V_k$ and their brackets, due to the erratic movement of Brownian motion. Another reason is the use of ``geometric rough paths'' (see \cite[Chapter 2.2]{FH14} for a definition and  \cite[Chapter 3]{FH14} for the discussion) thanks to which the Itô formula looks similar to the usual chain rule.

\medskip

{\bf Literature review: } In the literature, there are two ways of understanding the word ``non-Markovian'' regarding the topic of H\"ormander's hypoellipticity. On the one hand, certain authors as \cite{CT10, HP13, CHLT15} mean that the SDE's driving noise is a fractional Brownian motion or a general Gaussian process which may fail to be Markovian. On the other hand, another legitimate direction of investigation is to consider a source of non-Markovianity which is the path-dependence of the SDE. In this case, one needs to qualify the path-dependence, otherwise uniform ellipticity becomes the only recourse.

To the best of the authors' knowledge, regularity with a path-dependence via delays has been treated in the following papers. In \cite{BelMo91, BelMo95, stoica1998, Bell}, the authors deal with a martingale term of the form $\sigma(X_{t-h}) dW_t$ i.e one single delay in front the driving Brownian motion with a particular form of degeneracy on $\sigma$.  In contrast, in \cite{takeuchi2007a}, there is no delay in the martingale term and the author is only able to prove that a form of strong hypoellipticity similar to our Assumption \ref{hormander-condition} implies the regularity of laws. Furthermore, the analyzed SDEs have drift terms with various levels of generality: \cite{BelMo91} treats the case where the drift vanishes and \cite{stoica1998} treats an explicit form. When considering these aspects only, the classes of SDEs just described are all particular cases of Eq. \eqref{eq:sde_def} and the conditions which yield regularity are more restrictive than the paper's main theorem. This paper is the only piece of literature where a form of weak hypoellipticiy as in Assumption \ref{hormander-condition} is shown to be sufficient for the regularity of laws. This feature allows us treat the classical example  of Langevin Equation with delay where the noise is introduced to the totality of the system via the drift term. 

Let us also point out \cite{ST83} where non-Markovian SDEs is considered, and in particular \cite[Eq. 9.1 p.365]{ST83} which exhibits a path-dependence via a kernel. The forms of path dependence in \cite{ST83,takeuchi2007a} are beyond the scope of this paper and are understood as the limiting case when there is a continuum of delays.

\medskip

{\bf Contributions of the paper: } We give a criterion for smoothness of the density of $X_T$, which is expressed as a spanning condition, unlike the previous contributions for delayed SDEs. This form is closer in spirit to H\"ormander's original theorem \cite{hormander1967} and is ready-to-use.

Furthermore, the three following aspects of the proof are interesting in their own right.
\begin{itemize}
 \item We enhance Brownian trajectories with their delays into a rough path, showing it has all the desired features. Thus, we are able to follow the general lines of Hairer's streamlined proof \cite{Hairer11}, where rough path theory allows to bypass some of the technicalities of Malliavin calculus.
 \item We exhibit a new phenomenon, which we loosely qualify as noise flowing from the past: Delays manifest in the spanning condition through semi-brackets.  
 \item The Lie algebras appearing in the spanning condition are always larger or equal than their Markovian counterparts.  Thus, thanks to the noise flowing from the past, the regularity criterion has better chances of being satisfied.
\end{itemize}

\medskip

\subsection{Setting and main statement}
Let $\alpha$ be a real satisfying $\third < \alpha < \half$ and let $\Cc^\alpha$ be the Banach space of $\alpha$-H\"older continuous functions and for a given $a$, $\Cc^\alpha_{a}$ is the subset of $\Cc^\alpha$ that contains paths equal to $a$ at time $0$. For a vector field
$$
\begin{array}{cccc}
F: & [0,T]\times \R^{d\times N} & \to     & \R^d\\
   & (t, x_0, x_1, \dots, x_{N-1})  & \mapsto & F\left( t, x_0, \dots, x_{N-1} \right)
\end{array}
$$
and a path $(t,\xb)\in [0,T]\times \Cc^\a$ we define the partial derivatives of the functional $F$ as the elements given by:
\begin{align}
\label{def:partial-derivatives}
\pa_i F(t,\xb)&=(\pa_{x_i}F)(t,\xb_t,\ldots,\xb_{t-h_{N-1}}) \in \Lc( \R^d, \R^d ) \ ,\\
\pa_t F(t,\xb)&=(\pa_{t}F)(t,\xb_t,\ldots,\xb_{t-h_{N-1}})\in \Lc(\R, \R^d) \ .
\end{align}
Note that for all $i=0,\ldots N-1$ these partial derivatives measure the sensitivity of the vector field $F$ with respect to the $i$-th delay. The action of $\pa_i F$ on a vector a vector $v \in \R^d$ will be denoted $\pa_i F \cdot v$.

The delay case is also the setting of \cite{nnt2008} where the authors prove well-posedness for SDEs driven by rough paths and in particular for fractional Brownian motion with Hurst parameter $H>\third$. Also, given the right integration framework, which will be given in the preliminaries, we shall see that there is global existence and uniqueness of solutions under the following analytic assumptions.

\begin{assumption}[The analytic assumptions]
\label{assumptions:analytic}
The family of functions 
$$ V_k:\R_+\times \R^{d\times N}\to \R^d $$ 
are smooth with bounded derivatives at all order and satisfy for all $k=1\ldots m$,
\begin{align}
\label{eq:lip_product}
\pa_0 V_k \cdot V_k \mbox{ is uniformly Lipschitz} . 
\end{align}
\end{assumption}

\medskip

In the main Theorem \ref{thm:hormander}, we shall give a criterion in the form of spanning conditions, which is a geometric assumption. We now define the analog of H\"ormander's condition for delayed diffusions. 

\begin{definition}
\label{hormander-spaces}
1) We introduce first the Lie brackets of the vector fields with respect to the end-point of $X$:
\bea\label{dupire-bracket}
[U,V] =\pa_0 U \cdot V-\pa_0 V \cdot U
\eea
where $\pa_0$ stands for the derivative defined at \eqref{def:partial-derivatives}.

2) Given the SDE \eqref{eq:sde_def}, we define sets of vector fields which span the Lie algebra generated by the $V_k$:
$$
\Vc_0 := \left\{(s,\xb)\to V_k(s,\xb): k=1,\cdots, m \right\} \mbox{ and }
$$
$$
\Vc_{j+1} := \Vc_j \cup \left\{[U,V_k]: U \in \Vc_j, k=1,\cdots, m \right\}.
$$
3) We also define extensions of these sets by the contribution of $V_0$ and semi-brackets:
\begin{align*}
  \overline \Vc_j & := \Vc_j \\
& \bigcup\left\{(s,\xb)\to[F,V_0](s,\xb)+\pa_t F(s,\xb) +\sum_{i=1}^{N-1} \pa_i F(s,\xb) \cdot V_0(s-h_i,\xb):F\in \Vc_{j-1} \right\} \\
& \bigcup\left\{ (s,\xb)\to\pa_i F(s,\xb) \cdot V_k(s-h_i,\xb): F\in\Vc_{j-1},\  k=1\cdots m,\ i=1,\dots, N-1 \right\}.
\end{align*}
\end{definition}

\begin{rmk}
i) The bracket $[F,V_k]$, $\pa_t F$ and $\pa_i F$ are well defined for all $F\in \Vc_j$ and $k\geq 0$ since both $F(t,\xb)$ and $V_k(t,\xb)$ can be expressed as smooth functions of $(t,\xb_t,\ldots \xb_{t-h_{N-1}})$.

ii) Note that for $j>0$, the set $ \Vc_j$  is smaller than its Markovian counter-part that also contains the brackets with $V_0$. The bracket with $V_0$ is introduced at $\overline \Vc_j$ and we are able to infer regularity results for diffusions such as the Langevin equation with delay (treated in Section \ref{subsection:examples}).

iii) The fundamental difference between $ \Vc_j$ and $\overline \Vc_j$ is the fact that the elements of the first are functions of $(t,\xb_t,\ldots \xb_{t-h_{N-1}})$ but not the elements of the latter. 
\end{rmk}

Notice that in the non-Markovian case the functionals $V_k$ are necessarily depending on $t$ in a peculiar manner as time plays a special role. Thus, unlike \cite{Hairer11} for example, we made the choice of {\it not} treating the time variable as an additional dimension and adjust the brackets with respect to $V_0$ by adding the time derivative. However, the estimate on the drift part does not allow us to give separate contributions for each of the different terms in the sum 
$$(s,\xb)\to[F,V_0](s,\xb)+\pa_t F(s,\xb) +\sum_{i=1}^{N-1} \pa_i F(s,\xb) \cdot V_0(s-h_i,\xb).$$
We can only rely on the contribution of the sum to produce smoothness.

\begin{assumption}[The geometric assumption - H\"ormander's hypoellipticity condition]
\label{hormander-condition}
We assume {\it either} of the following hypotheses:
\begin{enumerate}
    \item Strong hypoellipticity: $\exists j_0$ such that
\beaa 
\inf_{|\eta|=1}\inf_{\xb\in \Cc^\a_{X_0}} \sup_{F\in \Vc_{j_0}} |\eta F(T,\xb)|>0 \ .
\eeaa

    \item Weak hypoellipticity : $\exists j_0$ such that
\beaa 
\inf_{|\eta|=1}\inf_{\xb\in  \Cc^\a_{X_0}} \sup_{F\in \overline \Vc_{j_0} } |\eta F(T,\xb)|>0 \ .
\eeaa
    \item The bounded case : The process $(X_t)_{t\in [0,T]}$ is uniformly bounded by a deterministic constant and there exists $j_0\geq 0$ such that  for all $\xb\in\Cc^\alpha_{X_0}$ and for all $|\eta|=1$ the pointwise H\"ormander condition holds
\bea \label{pointwise-hormander}
\sup_{F\in \overline \Vc_{j_0} } |\eta F(T,\xb)|>0 \ .
\eea
\end{enumerate}
\end{assumption}
 
\medskip

Clearly, the stronger condition (1) of Assumption \ref{hormander-condition} implies the weaker condition (2), while the condition (3) above is easier to check in the case of bounded processes. We are now ready to state the main result of the paper. 

\begin{thm}
\label{thm:hormander}
Let $X_T$ be the marginal of the solution to the SDE \eqref{eq:sde_def}. If the analytic assumption  \ref{assumptions:analytic} is satisfied, as well as either of the geometric assumptions in \ref{hormander-condition}, then $X_T$ has a smooth density with respect to the Lebesgue measure.
\end{thm}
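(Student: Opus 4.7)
The strategy is Malliavin's classical one, now routed through rough path calculus. It suffices to show that (i) $X_T\in \D^\infty$ and (ii) the Malliavin covariance matrix
$$M_T = \sum_{k=1}^m \int_0^T D_s^k X_T \otimes D_s^k X_T \, ds$$
is non-degenerate in the strong sense that $\E[\det(M_T)^{-p}]<\infty$ for every $p\geq 1$; smoothness of the density of $X_T$ then follows from the Malliavin integration-by-parts formula. Step (i) is obtained by differentiating \eqref{eq:sde_def}, which yields a linear delayed rough SDE for the Malliavin derivative; Assumption~\ref{assumptions:analytic} combined with continuity of the It\^o--Lyons map in rough path norm renders this rigorous, and an induction on the order of the derivative extends it to all orders. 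The derivative admits the representation
$$D_s^k X_T \;=\; J_{s,T}\, V_k(s, X) \;+\; R_s^k,$$
where $J_{s,T}$ is the rough path Jacobian flow and $R_s^k$ collects the contributions from the fact that a perturbation of $W^k$ at time $s$ also affects the integrands $V_\ell(t,X)$ at times $t\in[s+h_i,T]$ via their dependence on $X_{t-h_i}$.

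The bulk of the work is (ii). One must show that $\P(\langle \eta, M_T\eta\rangle<\ve)$ decays faster than any power of $\ve$, uniformly in unit vectors $\eta$. Assuming the contrary, this smallness translates, for each $k$, into smallness of $\int_0^T |\eta J_{s,T} V_k(s,X) + \text{(delay terms)}|^2 \, ds$. The key deterministic ingredient is then a Norris-type lemma for rough paths, converting $L^2$-smallness of a controlled rough path into smallness of its coefficients. Iterating this: once we know that $s\mapsto \eta J_{s,T} F(s,X)$ is small for some $F\in \Vc_j$, its rough It\^o expansion exhibits its diffusion coefficients, in each $dW^k$ for $k\geq 1$, as $\eta J_{s,T}[F,V_k](s,X)$ together with the semi-brackets $\eta J_{s,T}\pa_i F(s,X)\cdot V_k(s-h_i,X)$, while its drift coefficient appears as the lumped sum $\eta J_{s,T}\{[F,V_0](s,X)+\pa_t F(s,X)+\sum_i \pa_i F(s,X)\cdot V_0(s-h_i,X)\}$. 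This matches exactly Definition~\ref{hormander-spaces}: the bracket contributions feed $\Vc_{j+1}$, while the drift sum and delayed semi-brackets feed $\overline\Vc_j$. After $j_0$ iterations one obtains smallness of $\eta F(T,X)$ for every $F\in \Vc_{j_0}$ (respectively $\overline \Vc_{j_0}$), contradicting the weak (resp. strong) hypoellipticity in Assumption~\ref{hormander-condition}. The bounded case follows from a compactness argument upgrading \eqref{pointwise-hormander} to the required uniform lower bound.

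The principal difficulty is proving the rough path Norris lemma in the delayed setting. Classical Norris estimates exploit the semi-martingale It\^o decomposition, but here $s\mapsto \eta J_{s,T} F(s,X)$ is only a geometric rough path whose It\^o-type expansion produces, in addition to the familiar Lie brackets, the semi-bracket terms $\pa_i F\cdot V_k(s-h_i,X)$ arising because past noise feeds forward through the delays. Promoting $L^2$-smallness of the integrated rough path to individual smallness of each coefficient, with quantitative tail bounds uniform in $\eta$ and combined with suitable moment controls on $J_{s,T}$, is the technical heart of the proof and accounts for the peculiar lumped form of the drift contribution in Definition~\ref{hormander-spaces}, as signalled in the paragraph preceding Assumption~\ref{hormander-condition}.
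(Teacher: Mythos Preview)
Your broad outline follows Malliavin's scheme and correctly anticipates that the rough It\^o expansion of $s\mapsto \eta J_{s,T}F(s,X)$ produces the brackets and semi-brackets of Definition~\ref{hormander-spaces}. However, two substantive points are missing or misrepresented.

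First, the representation of the Malliavin derivative. With $J_{r,t}$ defined as the solution to \eqref{eq:flow1}---i.e., driven by the \emph{full} Fr\'echet derivative $D\widetilde V_k$, which already carries all the $\pa_i$ contributions---Proposition~\ref{proposition:malliavin_derivative_expr} gives the \emph{exact} factorization $\Dc_r X_T = J_{r,T}\, V(r,X)$; there is no residual $R_s^k$. The delay effects are encoded in $J$ itself, and the extra ``delay terms'' you carry into the Norris step would only obstruct the passage from $L^2$-smallness of the integral to smallness of $\eta J_{s,T}V_k$.

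Second, and more seriously, you omit the structural insight on which the whole argument hinges: the analysis must be restricted to the short window $[T_{\overline h},T]\subset[T_h,T]$ (see \eqref{def-th} and the paragraph preceding Lemma~\ref{lem-zF}). On $[T_h,T]$ a perturbation of $X_s$ reaches $X_T$ only through $\pa_0 V_k$, so the equation for $J$ simplifies to the linear RDE \eqref{eq:tangent_flow_sde}. This is what yields the splitting $J_{s,T}=J_{T_h,T}J_{T_h,s}^{-1}$, hence the invertibility of $J_{\cdot,T}$, and---crucially---the backward RDE \eqref{jt-rde} for $s\mapsto J_{s,T}$. Without that backward equation you cannot compute the expansion of $Z_F(s)=\eta J_{s,T}F(s,X)$ (Lemma~\ref{lem-zF}) on which the Norris iteration rests; it is precisely the Leibniz combination of \eqref{jt-rde} with Proposition~\ref{proposition:equi} that produces the Lie brackets $[F,V_k]$ alongside the semi-brackets. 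For $s<T_h$ the process $J_{s,T}$ need not be invertible and $s\mapsto J_{s,T}$ has no tractable dynamics; this is exactly the limitation flagged after \eqref{splitting}. Your integrals run over all of $[0,T]$, which cannot be made to work as stated.

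Finally, you misidentify the principal difficulty. The rough-path Norris lemma is not proved here but imported from \cite{HP13} (Lemma~\ref{norris}); what requires work is establishing $\theta$-H\"older roughness of the delayed path $\Wc(\overline h)$ via the small-ball estimate (Lemma~\ref{lemma:roughness}), and above all setting up the backward dynamics of $J_{\cdot,T}$ on the restricted interval so that Norris can be applied to \eqref{eq:zF}.
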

We give the proof of this theorem only at Subsection \ref{subsection-proof-hormander} after handling all the prerequisites. For now, let us sketch the idea of proof.
\begin{proof}[Idea of proof]
As shown by Malliavin calculus (see \cite{Nualart2006}), smoothness of the law of $X_T$ is implied by a control of the Malliavin matrix. The Malliavin matrix $\Mc_{0,T} \in M_d(\R)$ is a random variable introduced in Section \ref{section:malliavin_argument}, and needs to have an inverse with enough moments. This is implied by the statement that for all $\eta \in \R^d$, $\langle \eta, \Mc_{[0,T]} \eta \rangle$ is small with small probability.

The relationship with Lie brackets is as follows: $\langle \eta, \Mc_{[0,T]} \eta \rangle$ is bounded from below by processes expressed in term of the vector fields $\left( V_j \right)_{1 \leq j \leq m}$. In turn, these are themselves driven by Brownian motion integrated against Lie brackets. The integrands are again processes which are driven by Brownian motion driven by higher order Lie brackets and so on.

The idea is that if the $\langle \eta, \Mc_{[0,T]} \eta \rangle$ is small, then vector fields have to be small (along a certain direction), which themselves cannot be small unless higher order Lie brackets are small (along a certain direction). By induction on the order of Lie brackets, this is excluded by the H\"ormander spanning condition. Therefore, we are looking at an event of small probability.

Here comes in the very nice idea of injecting some rough path technology in order to obtain quantitative estimates. This has been implemented by Hairer in his streamlined proof \cite{Hairer11} of the classical H\"ormander criterion. The required statement is Norris' Lemma, which says informally that if a process remains small in absolute value, while being an integral with respect to Brownian motion, then the integrand {\it has to be } small. This statement is crucial in order to successfully prove the induction step in H\"ormander's criterion and can be given a quantitative form using the technology of rough paths. 

So far, we have given an account of the content of \cite{Hairer11}, written in the Markovian setting. Along these lines, our paper develops the correct framework of rough paths for delay equations, establishes a Norris' Lemma in this framework and then proceeds to prove invertibility of the Malliavin matrix.
\end{proof}

\medskip

\subsection{Examples}
\label{subsection:examples}
Let us illustrate the scope of the main theorem.

\subsubsection{Uniformly elliptic diffusions}
This is the standard example where we assume that there exists $\ve>0$ such that for the order of symmetric matrices we have
$$VV^*\geq \ve id.$$
Note that under this assumption the uniform spanning condition holds for $j_0=0$ and we obtain the smoothness of $X_T$. However this result is not new. Indeed, it is shown in \cite[Theorem 8.3]{ST83}, \cite{KS1984} and \cite{bally2016pathwise}, in a general path-dependent framework, that this condition implies the smoothness of $X_T$.
\subsubsection{Langevin Equation with Delay}
Now, here is an example where the usual Hörmander criterion extends as is to the setting of delays. Consider the diffusion in $\R^2$
\begin{align*}
dp_t & =V_0(p_t,q_t,p_{t-h},q_{t-h})dt + V_1(p_t,q_t,p_{t-h},q_{t-h}) \circ dW_t\\
dq_t & =p_t dt \ .
\end{align*}
with $V_1$ uniformly elliptic. By checking the spanning condition, one realizes that $\left( \Vc_j; j \geq 0 \right)$ is stationary from the index $j=0$ and for all $j \geq 0$:
\begin{align*}
  & \Vc_0=\left\{\left( \begin{matrix}
 V_1 \\
  0
 \end{matrix}\right)\right\} = \Vc_j \ .
\end{align*}
We compute $\overline \Vc_0$:
 \begin{align*}
& \left[\left( \begin{matrix}
 V_1 \\
  0
 \end{matrix}\right),\left( \begin{matrix}
 V_0 (p_s,q_s,p_{s-h},q_{s-h})\\
  p_s
 \end{matrix}\right)\right] + \pa_1\left( \begin{matrix}
 V_1 \\
  0
 \end{matrix}\right) \left( \begin{matrix}
 V_0(p_{s-h},q_{s-h},p_{s-2h},q_{s-2h}) \\
  p_{s-h}
 \end{matrix}\right)
 = \left( \begin{matrix}
* \\
  -V_1 \end{matrix} \right) \ .
 \end{align*}
Hence the uniform spanning condition is satisfied and we have the regularity of law of $X_T$. 

Similarly to \cite{takeuchi2007a} our drift term $V_0$ is allowed to be path-dependent. However, unlike \cite{takeuchi2007a} where strong hypoellipticity is assumed, we are able to treat the case of weak hypoellipticity. In particular, our spanning condition exploits the contribution of $V_0$ to the smoothness of laws. 

\subsubsection{Noise flowing from the past and semi-brackets}
Finally, let us give an example exhibiting a new phenomenon. We now consider the diffusion:
\begin{align*}
\left( \begin{matrix}
  p_t \\
  q_t\\
  r_t
 \end{matrix}\right)= 
  \int_0^t\left( \begin{matrix}
  1 \\
  1\\
  -r_{s-h}
 \end{matrix}\right)dW^1_s
+\int_0^t \left( \begin{matrix}
 -p_{s-h} \\
  q_{s-h}\\
 \sqrt{1+ r^2_{s-h}}
 \end{matrix}\right) dW^2_s \ .
\end{align*}
Again, we check the spanning condition. We have:
\beaa
&&\Vc_0= \left\{ \left( \begin{matrix}
  1 \\
  1 \\
  -r_{s-h} \end{matrix}\right),
  \left( \begin{matrix}
-p_{s-h} \\
  q_{s-h}\\
 \sqrt{1+ r^2_{s-h}} \end{matrix}\right)\right\}=\Vc_j \ .
\eeaa
 We compute the semi-brackets $\partial_1 V_2(t) V_1(t-h)$, $\partial_1 V_1(t) V_2(t-h)$ hence finding a subset of $\overline \Vc_0$. We have:
 \beaa
&&
\left\{
\left( \begin{matrix} 1 \\ 1\\ -r_{s-h}
       \end{matrix}\right),
\left( \begin{matrix} -1 \\ 1\\ -\frac{r_{s-h}r_{s-2h}}{\sqrt{1+r_{s-h}^2}}
       \end{matrix}\right),
\left( \begin{matrix} 0 \\ 0\\ -\sqrt{1+r_{s-2h}^2}
 \end{matrix}\right)\right\} \subset \overline \Vc_0 \ .
\eeaa 
Again, the uniform spanning condition is satisfied and $X_s$ has smooth densities for $s>h$. As the previous computation shows, semi-brackets are crucial in this case in order to create regularity.

\subsection{Structure of the paper}
In the Preliminaries of Section \ref{section:preliminaries}, we start by making precise rough path integration against Brownian motion and its delays. This will show that Eq. \eqref{eq:sde_def} is well posed in Stratonovich form with unique solutions, as well as that it is compatible with the Itô setting. In particular, Subsection \ref{subsection:wellposedness} shows how to reformulate our Hörmander criterion starting from the Itô setting.

Section \ref{section:derivatives} defines and collects results on the Malliavin derivative in this non-Markovian context.

Section \ref{section:malliavin_argument} finally proves the main result. We define the classical Malliavin Gram matrix, quickly review how its control yields smoothness and relate it to tangent flows. 

\subsection{Acknowledgements}
R.C. and I.E. would like thank Y. Bruned, L. Coutin and J. Teichmann for fruitful conversations. Research of I.E. is partly supported by the Swiss National Foundation Grant SNF 200021\_153555. 

Both authors thank the referees for their valuable and constructive comments.

\section{Preliminaries}
\label{section:preliminaries}

From now on, $m$ will refer to the number of Brownian motions we will be working with and $d$ is the dimension of the process $X$ we will study. $\{e_j\}_{j=1,\cdots,d}$ is the canonical basis of $\R^d$ and $\{f_k\}_{k=1,\cdots,m}$ the canonical basis of $\R^m$. $M_d(\R)$ denotes the set of $d$ dimensional matrices. 

Throughout the paper $E$ will stand for a finite dimensional vector space. For any $E$, we denote by $\Cc([0,T],E)$ the space of continuous $E$-valued paths. $\Cc^\alpha([0,T],E)$ will denote the subspace of $\alpha$-H\"older continuous functions. We will drop the dependence on $E$ if it is obvious from context. Also, given a path $X:[0,T] \rightarrow E$ and $(s,t) \in [0,T]^2$, we write the increment between $s$ and $t$ as $X_{s,t} = X_t - X_s$.

Let us start by giving a meaning to Equation \eqref{eq:sde_def} and a solid foundation to its treatment.

\subsection{Stochastic integration and rough paths}
\label{subsection:rough_paths}

\subsubsection{Enhancing Brownian motion to a rough path}
Recall that $W$ is an $m$-dimensional Brownian motion. There is no loss of generality in assuming that $W$ is two-sided: $\left( W_t; t\geq 0 \right)$ and $\left( W_{-t};t \geq 0 \right)$ are independent Brownian motions. Taking $W$ to be two-sided will avoid technical boundary effects and delays can be arbitrarily large.

In this section we give statements for any given $h = \left( h_i \right)_{0 \leq i \leq N-1} \in \R_+^{N}$ increasing sequence of delays. The results will be valid upon changing $h$ to another sequence of delays if necessary. We set $E(h) := \R^{m \times N}$ and consider the $E(h)$-valued process:
$$ \Wc_t(h) = \left(\Wc_t^{k,j}\right)_{\left\{{k=1,\ldots, m,\ j=0,\ldots, N-1}\right\}}
        := \left( W_t, \ W_{t-h_1}, \ W_{t-h_2}, \, \dots, W_{t-h_{N-1}} \right), $$
where each component is understood as a vector in $\R^m$.
When $h$ is understood from context, we drop the dependence on $h$ and write $\Wc_t$ instead of $\Wc_t(h)$. A relevant quantity will be the first delayed date before maturity
\begin{align}
\label{def-th}
T_{h}:= T-{h_1}.
\end{align}
The goal of this subsection is to establish that $\Wc$ is a bona-fide rough path against which we can integrate. We now give a lemma concerning the quadratic covariation of the process $\Wc$. 

\begin{lemma}
\label{lemma:zero_cov}
Consider two indices $i,j$, and two reals $r,r'$. For a partition $\Pc$ of $[s,t]$ with mesh size $|\Pc|$ going to zero, we have the limit in $L^2$ and in probability:
$$ \langle W^i_{r+\cdot}, W^j_{r'+\cdot} \rangle_{s,t}
:= \lim_{|\Pc| \rightarrow 0} \sum_{[u,v] \in \Pc} \left( W^i_{r+v} - W^i_{r+u} \right) \left( W^j_{r'+v} - W^j_{r'+v} \right)
 = \delta_{i,j} \delta_{r,r'} (t-s) \ .$$
\end{lemma}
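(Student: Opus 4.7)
The strategy is to separate the computation into the three cases $(i=j,\,h=h')$, $(i\neq j)$, and $(i=j,\,h\neq h')$. The first case reduces to the classical fact that the quadratic variation of Brownian motion on $[s+h,t+h]$ equals $t-s$, so the limit is indeed $t-s$ in $L^2$ and in probability. In the remaining two cases the aim is to show that the sums tend to $0$ in $L^2$, which then also gives convergence in probability.

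Set $A_{u,v} := (W^i_{h+v}-W^i_{h+u})(W^j_{h'+v}-W^j_{h'+u})$ and $S_\Pc := \sum_{[u,v]\in\Pc} A_{u,v}$. I would first observe that in either of the two remaining cases, each individual $A_{u,v}$ has mean zero: for $i \neq j$ this is immediate by independence of $W^i$ and $W^j$; for $i=j,\, h\neq h'$ it holds as soon as $|\Pc| < |h-h'|$, because then the intervals $[h+u,h+v]$ and $[h'+u,h'+v]$ are disjoint, making the two Brownian increments independent. Thus $\mathbb{E}[S_\Pc]=0$ and it suffices to bound $\mathbb{E}[S_\Pc^2]$.

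To control $\mathbb{E}[S_\Pc^2]$, I would expand and use Isserlis's (Wick's) formula on the product of four centered Gaussians appearing in $\mathbb{E}[A_{u_1,v_1}A_{u_2,v_2}]$. For two Brownian increments $W_b-W_a$ and $W_d-W_c$ over intervals $[a,b],[c,d]\subset[0,T]$, one has the clean bound $|\mathbb{E}[(W_b-W_a)(W_d-W_c)]| = |[a,b]\cap [c,d]|$. The Wick expansion produces three pairing terms; in each case at least one of the two factors involves intervals that either coincide (giving the diagonal contribution) or are shifted versions of disjoint partition cells. The disjointness of partition cells $[u_1,v_1]\neq[u_2,v_2]$ combined with the shift $h$ or $h'$ being either equal (trivially disjoint) or differing by more than $|\Pc|$ (forcing the overlap to have length $O(|\Pc|)$ and to occur only for $O(1)$ pairs per fixed cell) kills all off-diagonal contributions up to terms of order $(t-s)|\Pc|$. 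The diagonal contribution $\sum_{[u,v]\in\Pc}\mathbb{E}[A_{u,v}^2]$ is at most $\sum_{[u,v]\in\Pc}(v-u)^2 \le |\Pc|(t-s)$.

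Summing up, I would conclude $\mathbb{E}[S_\Pc^2] = O\bigl((t-s)|\Pc|\bigr) \to 0$, which gives both the $L^2$ and the probability statement. \emph{The main obstacle} is the case $i=j,\,h\neq h'$, where one cannot simply invoke independence of the two Brownian motions; the careful bookkeeping of which of the three Wick pairings contribute, and in particular the observation that overlapping shifted intervals occur only for $O(1)$ neighbouring partition cells, is the technical heart of the argument. Everything else is essentially a direct computation from the covariance structure of Brownian motion.
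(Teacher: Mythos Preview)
Your proposal is correct and follows the same overall scheme as the paper: reduce to showing $\E[S_\Pc]=0$ and $\E[S_\Pc^2]\to 0$ in the non-trivial cases, with the diagonal contribution $\sum(v-u)^2\leq |\Pc|(t-s)$ doing the work. The one place where the paper is sharper than your sketch is the treatment of the off-diagonal terms: rather than invoking Wick's formula and bounding overlaps by $O((t-s)|\Pc|)$, the paper observes that once $2|\Pc|<|h-h'|$, among the four intervals $[u,v],[u+h,v+h],[w,x],[w+h,x+h]$ the rightmost one is disjoint from the other three whenever $[u,v]\neq[w,x]$, so the corresponding Brownian increment is independent and the cross term vanishes \emph{exactly}. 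Your Wick-pairing bookkeeping reaches the same conclusion (in fact all three pairings vanish off-diagonal, not merely up to $O(|\Pc|)$), but the rightmost-interval argument bypasses that case analysis entirely.
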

\begin{proof}
The case $r=r'$ is obvious. By symmetry and time shifting, we can assume $r>0$ and $r'=0$. For shorter notations, set
$$ \Cc =\sum_{[u,v] \in \Pc} \left( W^i_v - W^i_u \right) \left( W^j_{r+v} - W^j_{r+u} \right) \ .$$
Upon assuming $2|\Pc|<r$, we have that $\E\left( \Cc \right) = 0$ as the intervals $[u,v], [u+r, v+r]$ are disjoint. Also:
\begin{align*}
    \E\left( \Cc^2 \right)
= & \sum_{[u,v], [w,\ell] \in \Pc} 
    \E\left[ \left( W^i_v - W^i_u \right) \left( W^j_{r+v} - W^j_{r+u} \right) 
             \left( W^i_\ell - W^i_w \right) \left( W^j_{r+\ell} - W^j_{r+w} \right) 
      \right]\\
= & \sum_{[u,v] \in \Pc} (v-u)^2 \E\left( \Nc^2 \right)^2 \stackrel{|\Pc| \rightarrow 0}{\longrightarrow} 0 \ .
\end{align*}
Indeed, in the double sum, the right-most interval among $[u,v], [u+r, v+r], [w,\ell], [w+r, \ell+r]$ does not intersect the others except if $[u,v] = [w,x]$.
\end{proof}

We fix $\a,\theta$ satisfying $1/3< \a<1/2< \theta<2\a$. Recall that an $\alpha$-Holder path $X$ is lifted to a rough by adjoining another path $\X$ which is $2\alpha$-Holder. We recall the following definition from \cite{FH14} of the topology we use:
\begin{definition}
\label{def-alpha-holder}
We say that the pair $(X,\X)$ is an $\alpha$-Holder rough path on a Banach space $E$, if the mappings 
$$X:[0,T]\to E\mbox{ and } \X:[0,T]^2 \to E\otimes E$$
satisfy
\begin{align}
\Vert X\Vert_\a:= \sup_{0\leq s<t \leq T} \frac{|X_{s,t}|}{|t-s|^\a}<\infty,\quad \Vert \X\Vert_{2\a}:= \sup_{0\leq s\neq t\leq T}\frac{|\X_{s,t}|}{|t-s|^{2\a}}<\infty.
\end{align}
In such a case, by abuse of notation, we say that $(X,\X)\in \Cc^\a([0,T],E)$. We write $\vertiii {X }_{\a}:=\Vert X\Vert_{\a}+\sqrt{\Vert\X\Vert_{2\a}}$ and $\Vert X\Vert_{\Cc^\a}:=\Vert X\Vert_{\a}+\Vert X\Vert_{\infty}$.

For $(X,\X)\in \Cc^\a([0,T],E)$, we say that $(X,\X)$ is a geometric rough path, denoted $(X,\X)\in \Cc_g^\a([0,T],E)$, if 
\begin{align}
\label{def-geometric}
Sym (\X_{s,t})= \frac{1}{2}X_{s,t}\otimes X_{s,t},\mbox{ for all }t,s\in [0,T].
\end{align}
\end{definition}

We first need to define the first order iterated integrals of $\Wc$ in order to form the lift $\W$ also known as L\'evy stochastic areas. With $E = E(h)=\R^{m \times N}$, it is an $(E \otimes E)$-valued path and it is given for $s<t$:
\begin{align}
\label{def-W-lift}
 \left(\W_{s,t}^{k_1,i_1,k_2,i_2}\right)_{\left\{\substack{k_1,k_2=1,\ldots, m,\\ i_1,i_2=0,\ldots, N-1}\right\}}= \left( \int_{s}^t W^{k_1}_{s-h_{i_1},r-h_{i_1}} 
                         \circ dW^{k_2}_{r-h_{i_2}}\right)_{\left\{\substack{k_1,k_2=1,\ldots m,\\ i_1,i_2=0,\ldots, N-1}\right\}} \ .
\end{align}
The matter at hand is to give a precise meaning of the above integrals in such a way that the "first order calculus" condition \eqref{def-geometric} holds. We have two possibilities.

The first possibility is to define the iterated integrals as limits in probability of Riemann sums. More precisely, if $\Pc$ is a partition of $[s,t]$ with mesh size $|\Pc| \rightarrow 0$ and $(X,Y)$ is a pair of paths:
$$ \int_{s}^t X_r \circ d Y_r
   = \lim_{|\Pc| \rightarrow 0}
     \sum_{[u,v] \in \Pc}
     \half \left( X_u + X_v \right)
     \left( Y_v - Y_u \right)
.$$
The above limit is well-defined for $\left( X, Y \right) = \left(W^{k_1}_{r-h_{i_1}}, W^{k_2}_{r-h_{i_2}}\right)$ as soon as $h_{i_2} \leq h_{i_1}$. Indeed, the left-centered Riemann sum converges by standard adapted It\^o integration and we can use Lemma \ref{lemma:zero_cov} to pass to the Stratonovich case. In the other case $h_{i_2} > h_{i_1}$, notice that we have the first order calculus rule at the discrete level:
\begin{align}
\label{eq-discrete-first-order}
   \sum_{[u,v] \in \Pc} \half \left( X_u + X_v \right)\left( Y_v - Y_u \right)
 + \sum_{[u,v] \in \Pc} \half \left( Y_u + Y_v \right)\left( X_v - X_u \right)
 = X_t Y_t - X_s Y_s
\end{align}
and as such the first term converges to a limit as soon as the second does. Therefore, \eqref{def-W-lift} is well-defined as limits in probability of Riemann sums and gives a geometric rough path (i.e. satisfying the equality \eqref{def-geometric}), as the first order calculus rule is built-in at the discrete level already.

The second possibility is to invoke an anticipative integration theory such as Skororhod's. In their paper \cite{nualart1988stochastic} Section 4, Nualart and Pardoux form anticipative Riemann sums which are centered "A la Stratonovich", prove that they converge and relate them to Skorohod's integral. In any case limits in probability of Riemann sums and anticipative Stratonovich integrals "A la Nualart-Pardoux" coincide. See \cite[Exercise 5.17]{FH14} as well as \cite{ocone1989generalized}. 

\subsubsection{Integration with respect to $\Wc$}
Thanks to this paragraph, for systems controlled by delays, we will give a proper meaning to the integration in Eq. \eqref{eq:sde_def}. 

\begin{definition}
\label{def-controlled}
We say that $Y\in \Cc^\a ([0,T], E)$ is controlled by $\Wc$ on a Banach space $E$ if there exists $Y'\in \Cc^\a ([0,T], \Lc(E(h),E))$ such that $R_Y$ defined by 
\begin{align}
\label{controlled}
Y_{s,t}= Y'_s \Wc_{s,t} + R_Y(s,t),\mbox{ for all }0\leq s\leq t\leq T,
\end{align}
satisfies $\Vert R_Y \Vert_{2\alpha}<\infty$. We denote the space of controlled rough paths by $$(Y,Y')\in \Dc_\Wc^{2\alpha} ([0,T], E)$$
and the norm in this space by 
$$\Vert Y,Y'\Vert_{\Wc,2\alpha}:=|Y_0|+\Vert Y'\Vert_{\Cc^\alpha}+ \Vert R_Y\Vert_{2\alpha}.$$
\end{definition}

We recall the following integration theorem due to Gubinelli \cite[Theorem 4.10]{FH14}:
\begin{thm}
\label{thm:continuity_integration}
For every Banach space $E$, for all $(Y,Y')\in \Dc_{\Wc}^{2\a} ([0,T], \Lc(E(h),E))$ the controlled integration mapping
$$
\begin{array}{cccc}
\Dc_{\Wc}^{2\a} \left( [0,T], \Lc(E(h),E) \right) &
\longrightarrow &
\Dc_{\Wc}^{2\a} \left( [0,T], E \right)\\
(Y,Y') & \mapsto & \left(\int_0^\cdot Yd\Wc_t ,Y\right)
\end{array}
$$
where 
\begin{align*}
   \int_s^t Y(r)  d\Wc_r
 &= \sum_{ \substack{ 1 \leq k \leq m \\
                     0 \leq i \leq N-1 } }
    \int_s^t Y^{k,i}(r)  d\Wc^{k,i}_r
\\
&:= \sum_{ \substack{ 1 \leq k \leq m \\
                     0 \leq i \leq N-1 } }
   \lim_{|\Pc|\to 0}\sum_{[u,v]\in\Pc}\left( Y^{k,i}_u \Wc_{u,v}^{k,i}+ \sum_{ \substack{ 1 \leq k_2 \leq m \\
                     0 \leq i_2 \leq N-1 } }
((Y^{k,i})'_u)^{k_2,i_2} \W_{u,v}^{k,i,k_2,i_2}\right)
\end{align*}
is continuous and bounded with bound 
$$   \left\lVert \int_0^\cdot Yd\Wc_t ,Y \right\rVert_{\Wc,2\alpha} 
\leq C\left( \Vert Y\Vert_{\Cc^\a}
  +  \Vert \Wc\Vert_\a\Vert R_Y\Vert_{2\a}+\Vert \W\Vert_{2\a} \Vert Y'\Vert_{\Cc^\a}
     \right) \ .$$
\end{thm}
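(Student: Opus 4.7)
The plan is to invoke the Sewing Lemma (Gubinelli's germ-to-integral principle): given a two-parameter function $\Xi_{s,t}$ whose coherence defect $\delta \Xi_{s,u,t} := \Xi_{s,t} - \Xi_{s,u} - \Xi_{u,t}$ satisfies $|\delta\Xi_{s,u,t}| \leq C |t-s|^{3\a}$ with $3\a>1$, there exists a unique additive functional $I(s,t)$ with $|I(s,t) - \Xi_{s,t}| \leq C' |t-s|^{3\a}$, realized concretely as the limit of the Riemann sums $\sum_{[u,v]\in \Pc} \Xi_{u,v}$. For the germ I would take exactly the compensated sum appearing in the statement,
\[
\Xi_{s,t} := Y_s \Wc_{s,t} + Y'_s \W_{s,t},
\]
which is the natural second-order approximation dictated by the fact that $(Y,Y') \in \Dc_{\Wc}^{2\a}$.

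The heart of the argument is the coherence estimate. Expanding $\delta\Xi_{s,u,t}$ using the additivity of $\Wc$, Chen's relation
\[
\W_{s,t} = \W_{s,u} + \W_{u,t} + \Wc_{s,u} \otimes \Wc_{u,t},
\]
and the controlled expansion $Y_{s,u} = Y'_s \Wc_{s,u} + R_Y(s,u)$, the contribution $Y'_s\, \Wc_{s,u} \otimes \Wc_{u,t}$ cancels by design and a short computation yields
\[
\delta\Xi_{s,u,t} = - R_Y(s,u)\, \Wc_{u,t} + Y'_{s,u}\, \W_{u,t}.
\]
Each summand is a product of two H\"older increments whose exponents add up to $3\a$, giving
\[
|\delta \Xi_{s,u,t}| \leq \bigl( \Vert R_Y\Vert_{2\a} \Vert \Wc\Vert_\a + \Vert Y'\Vert_\a \Vert \W\Vert_{2\a} \bigr) |t-s|^{3\a}.
\]
This bookkeeping is the only step where anything substantive happens and, as such, the main (though very classical) obstacle; it is also the place where the threshold $\a>1/3$ enters in an essential way, since one needs $3\a>1$ for the Sewing Lemma to apply.

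Having obtained $I(s,t) = \int_s^t Y\, d\Wc$ from the Sewing Lemma, I would read off the controlled-path structure: the bound $|I(s,t) - Y_s \Wc_{s,t} - Y'_s \W_{s,t}| \leq C |t-s|^{3\a}$ implies that $R_I(s,t) := I(s,t) - Y_s \Wc_{s,t}$ is $2\a$-H\"older, so $(I,Y) \in \Dc_{\Wc}^{2\a}$ with derivative process equal to $Y$ itself. For the advertised norm estimate, I would unpack
\[
\Vert I,Y \Vert_{\Wc,2\a} = |I_0| + \Vert Y\Vert_{\Cc^\a} + \Vert R_I\Vert_{2\a},
\]
noting that $I_0=0$, that the middle term is $\Vert Y\Vert_{\Cc^\a}$ by construction, and that the decomposition $R_I(s,t) = Y'_s \W_{s,t} + (I(s,t) - \Xi_{s,t})$ together with the Sewing Lemma bound contributes exactly $\Vert \W\Vert_{2\a} \Vert Y'\Vert_{\Cc^\a}$ and $\Vert \Wc\Vert_\a \Vert R_Y\Vert_{2\a}$ to $\Vert R_I \Vert_{2\a}$. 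Continuity of the integration map then follows from its linearity combined with these quantitative bounds.
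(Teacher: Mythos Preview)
Your argument via the Sewing Lemma is correct and is precisely the standard proof of Gubinelli's integration theorem; note that the paper does not give its own proof of this statement but simply recalls it from \cite[Theorem 4.10]{FH14}, where the same Sewing Lemma computation is carried out. One cosmetic point: in your expression for $\delta\Xi_{s,u,t}$ the sign in front of $Y'_{s,u}\,\W_{u,t}$ should be negative rather than positive, but this has no effect on the magnitude estimate or on anything downstream.
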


\subsubsection{Roughness of \texorpdfstring{$\Wc$}{Wc}}
For the convenience of the reader, we recall the concept of roughness for rough paths as in \cite[Definition 6.7]{FH14}. The goal of this subsection is to prove roughness for $\Wc$. This will be crucial in order to use the so-called Norris lemma, a quantitative version of the Doob-Meyer decomposition. 

\begin{definition}
A path $X:[0,T]\to E$ is called $\theta$-Holder rough on scale $\ve_0>0$ and on the interval $[0,T]$ if there exists $L>0$ such that for any linear form $\varphi \in E^*$, $s\in[0,T]$ and $\ve\in(0,\ve_0)$, there exists $t\in [0,T]$ such that 
$$|t-s|\leq \ve \ ,\mbox{ and } \ |\varphi X_{s,t}|\geq L\ve^\theta|\varphi|.$$
The largest such $L$ is called the modulus of $\theta$-Holder roughness of $X$.
\end{definition}

\begin{lemma}
\label{lemma:roughness}
We can choose a version of the Brownian motion such that $\Wc$ is $\theta$-Holder rough at scale $\frac{T}{2}$ on $[0,T]$.
\end{lemma}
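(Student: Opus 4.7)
The plan is to reduce the statement to the classical $\theta$-H\"older roughness of standard multi-dimensional Brownian motion (see for instance \cite[Proposition 6.12]{FH14}) by exploiting a local independence structure of the path $\Wc$.

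Let $\delta_0 := \min_{1 \leq i \leq N-1}(h_i - h_{i-1}) > 0$, with the convention $h_0 = 0$. The key observation is: whenever $|t-s| < \delta_0$, the time-intervals $\{[s-h_i, t-h_i]\}_{0 \leq i \leq N-1}$ are pairwise disjoint, so the $mN$ scalar components of
$$ \Wc_{s,t} = \bigl( W^k_{t-h_i} - W^k_{s-h_i} \bigr)_{\substack{1 \leq k \leq m \\ 0 \leq i \leq N-1}} $$
are jointly independent centered Gaussians of variance $|t-s|$. Equivalently, on any compact interval $I \subset \R$ of length strictly less than $\delta_0$, the process $u \mapsto \Wc_u - \Wc_{\inf I}$ restricted to $I$ has the law of a standard $(mN)$-dimensional Brownian motion.

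Next I would cover $[0,T]$ by a finite family $(I_l)_l$ of compact sub-intervals, each of length strictly less than $\delta_0$, arranged so that every $s \in [0,T]$ lies in at least one $I_l$ together with a whole neighbourhood of radius $\varepsilon_1 > 0$. On each $I_l$, the preceding paragraph identifies $\Wc$ with a standard $(mN)$-dimensional Brownian motion on a compact interval (up to a deterministic shift), so the classical Brownian roughness yields an almost surely positive modulus $L_l$ of $\theta$-H\"older roughness on $I_l$; the minimum $L_{\mathrm{loc}} := \min_l L_l$ is then almost surely positive and serves as a uniform constant valid for every $\varphi \in V(h)^*$, every $s \in [0,T]$ and every $\varepsilon \in (0, \varepsilon_1)$. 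To promote the bound from scale $\varepsilon_1$ up to the announced scale $T/2$, I would use monotonicity: for $\varepsilon \in [\varepsilon_1, T/2)$, apply the previous step at the fixed scale $\varepsilon_1$ and absorb the scale mismatch into a smaller global constant $L := L_{\mathrm{loc}}(2\varepsilon_1/T)^\theta$, which satisfies $L\varepsilon^\theta \leq L_{\mathrm{loc}} \varepsilon_1^\theta$ on the relevant range.

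The hard part is the classical Brownian roughness result itself, whose proof combines a small-ball lower bound for Gaussian vectors with a Borel--Cantelli argument over dyadic partitions; what must be verified is that the modulus $L_l$ is almost surely positive \emph{uniformly} in the base-point $s$ and in the functional $\varphi$, which is exactly what one obtains on a compact interval. Finiteness of the dyadic covering then allows the local moduli to be aggregated into a single global constant.
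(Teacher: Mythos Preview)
Your approach is correct and rests on the same pivotal observation as the paper: for time increments shorter than the minimal delay gap $\delta_0=\min_i(h_i-h_{i-1})$, the translated intervals $[s-h_i,t-h_i]$ are pairwise disjoint, so the $mN$ components of $\Wc_{s,t}$ behave like increments of a standard $(mN)$-dimensional Brownian motion. Where you differ is in the packaging of that observation. The paper does not localise and patch; instead it derives a single global small-ball estimate
\[
\P\Big(\sup_{0\leq t-s\leq\delta}|\varphi\,\Wc_{s,t}|<\varepsilon\Big)
\leq C\exp\Big(-\tfrac{c|\varphi|^2(\delta\wedge\Delta(h))}{\varepsilon^2}\Big)
\]
by shrinking the window to $\delta\wedge\Delta(h)$, then invoking the independence of the delayed increments to rewrite $\varphi\,\Wc_{s,t}$ as a one-dimensional Brownian increment scaled by $|\varphi|$; the dyadic Borel--Cantelli argument of \cite[Proposition~6.11]{FH14} is then run once on the whole of $[0,T]$. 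Your route---identify $\Wc$ with an $(mN)$-dimensional Brownian motion on each short sub-interval, apply the Brownian roughness theorem as a black box, take a finite minimum, and finally trade scale for a smaller constant via the monotonicity $L\varepsilon^\theta\leq L_{\mathrm{loc}}\varepsilon_1^\theta$---is slightly more modular: it avoids checking that the FH14 machinery tolerates the modified exponent $\delta\wedge\Delta(h)$ in the small-ball bound. The price is the extra bookkeeping of the cover (ensuring every $s$ sits at distance $\geq\varepsilon_1$ from the boundary of some $I_l$ inside $[0,T]$) and the scale-extension step, both of which you handle correctly.
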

\begin{proof}
The proof of roughness is exactly the same as the proof of \cite[Proposition (6.11)]{FH14}. The only ingredient that is missing is the small ball estimate for $\Wc$, which we now prove.

Set $\Delta(h):= \min_{0 \leq i \leq N-2} \left|h_i - h_{i+1}\right|$ with the convention that it is infinity when $N=1$. We shall prove that there are constants $c, C > 0$ such that for all $\varepsilon>0$, $\delta>0$ and $\varphi = \left( \varphi_{i,k} \right) \in E^*$:
\begin{align}
\label{eq:small_ball}
     \P\left( \sup_{0 \leq t-s \leq \delta} \left| \varphi \Wc_{s,t} \right| < \varepsilon \right)
\leq C \exp\left( -\frac{c |\varphi|^2 \left( \delta \wedge \Delta(h) \right) }{\varepsilon^2} \right) \ ,
\end{align}
where $|\varphi|$ is the Euclidean norm. This estimate is sufficient to replace \cite[Eq. (6.11) p. 91]{FH14} so that all the arguments carry verbatim. To prove Eq. \eqref{eq:small_ball}, we start by using the translation invariance and symmetry of Brownian motion increments:
\begin{align*}
  & \P\left( \sup_{0 \leq t-s \leq \delta} \left| \varphi \Wc_{s,t} \right| < \varepsilon \right)\\
= & \P\left( \sup_{0 \leq t-s \leq \delta}
             \left| \sum_{i=1}^{N-1} \sum_{k=1}^m \varphi_{i,k} W_{s-h_i, t-h_i}^k\right| < \varepsilon \right)\\
= & \P\left( \sup_{0 \leq t \leq \delta}
             \left| \sum_{i=1}^{N-1} \sum_{k=1}^m \varphi_{i,k} W_{h_i, h_i-t}^k\right| < \varepsilon \right)\\
\leq & \P\left( \sup_{0 \leq t \leq \delta \wedge \Delta(h)}
             \left| \sum_{i=1}^{N-1} \sum_{k=1}^m \varphi_{i,k} W_{h_i, h_i-t}^k\right| < \varepsilon \right) \ .\\
\end{align*}
Now notice that $W$ being a two sided Brownian motion, the family of processes 
$$\left( t \mapsto W_{h_i, h_i-t}^k ; 0 \leq t <\Delta(h) \right)_{i,k}$$
are independent Brownian motions as we have increments over disjoint intervals when changing $i$ and independent Brownian motions when changing $k$. As such by packaging them into a single Brownian motion $B$ and then invoking the standard small balls estimates, there exists constants $c, C$ such that:
\begin{align*}
       \P\left( \sup_{0 \leq t-s \leq \delta} \left| \varphi \Wc_{s,t} \right| < \varepsilon \right)
\leq & \P\left( \sup_{0 \leq t \leq \delta \wedge \Delta(h)}
                \left| B_t \right| < \frac{\varepsilon}{|\varphi|}\right) \\
\leq & C \exp\left( -\frac{c |\varphi|^2 \left( \delta \wedge \Delta(h) \right) }{\varepsilon^2} \right) \ .
\end{align*}
\end{proof}
\begin{rmk}\label{rmk-extension}
The use of the two sided Brownian motion instead of the Brownian motion allows us to cancel the boundary effects in Lemma \ref{lemma:roughness}. However there is small price to pay here. In order to make the formulas work in the sequel, for all $k=0,\ldots ,m$, we extend $V_k$ to negative times:
$$V_k(s,\xb)=0,\mbox{ for all }s<0.$$
Note that this extension is continuous on $(-\infty,0)$ and on $(0,\infty)$. We will need to be careful with the possible discontinuity of higher derivatives at time $0$, as mentioned in the upcoming Remark \ref{rmk:handlingDiscontinuities}. 
\end{rmk}

\subsection{Well-posedness of the SDE and Itô formula}
\label{subsection:wellposedness}
For completeness, we show that it is possible to reformulate the SDE in an It\^o form as the vector fields are adapted. To do so, one should define the It\^o lift $\W^{\textrm{It\^o}}$ from $\W$ by taking into account quadratic variations:
$$ \W_{s,t}^{k_1, i_1, k_2, i_2} = \left( \W^{\textrm{It\^o}}_{s,t} \right)^{k_1, i_1, k_2, i_2}
                                 + \half  \langle W^{i_1}_{\cdot-h_{k_1}}, W^{i_2}_{\cdot-h_{k_2}} \rangle_{s,t} \ .$$
The covariation of Brownian motion against its own delay is zero which is known as absence of autocorrelation. This was already formalized in Lemma \ref{lemma:zero_cov}. It is classical to see that rough integration against adapted processes and with the It\^o lift $\W^{\textrm{It\^o}}$ coincides with the usual adapted stochastic integration (see for e.g. \cite[Proposition 5.1]{FH14}). As such Eq. \eqref{eq:sde_def} is readily reformulated as an It\^o integral:
\begin{align}
\label{eq:sde_def_ito}
X_t & = X_0 + \int_0^t \left( V_0(r,X) + \half \sum_{k=1}^m  \partial_0 V_k(r,X) \cdot V_k(r,X) \right)dr + \sum_{k=1}^m \int_0^t V_k(r,X) dW^k_r \\
\nonumber
    & = X_0 + \sum_{k=0}^m \int_0^t \widetilde{V_k}(r,X) dW^k_r \ ,
\end{align}
where 
\begin{align*}
\widetilde V_0(r,X)&=V_0(r,X) +  \half \sum_{k=1}^m \partial_0 V_k(r,X) \cdot V_k(r,X) \mbox{ and,} \\
\widetilde V_i(r,X)&=V_i(r,X)\mbox{ for all } 1 \leq i \leq m. 
\end{align*}
Notice that $X$ is a semi-martingale although the integrands in the SDE are not necessarily semi-martingales. The reader more familiar with the It\^o  framework rather than rough paths can establish well-posedness of Eq. \eqref{eq:sde_def_ito}. Indeed, the vector fields $\widetilde{V}_k$ are Lipschitz continuous in the variable $\xb$ for every fixed $t$, uniformly. We have existence and uniqueness of strong solutions to the Equation \eqref{eq:sde_def} via a standard implementation of the Picard iteration scheme, only in the function space $\Cc\left( [0,T], \R^d \right)$ (see the more general theorem 4.6 in \cite{LiShi2001}). Via standard arguments in this framework, solutions are global with $\E\left( \sup_{0 \leq s \leq T} |X_s|^p \right)^{\frac{1}{p}} < \infty$.

Leaving the Itô framework, let us show that Eq. \eqref{eq:sde_def} is well-posed within the theory of rough differential equations \cite[Chapter 8]{FH14}.

\begin{proposition}[Rough forms for the SDE]
There exists a unique process 
$$
\left( X_t ; 0 \leq t \leq T \right)
$$ which solves both the SDE \eqref{eq:sde_def_ito}, formulated in term of Itô integrals and the RDE:
\begin{align}
\label{eq:sde_def_rough}
X_t & = X_0 + \sum_{k=0}^m \int_0^t V_k(t,X) d\Wc^{k,0}_r \ .
\end{align}
As such it is both a semi-martingale and a controlled rough path satisfying 
$$\E\left[ \Vert X,\{V_k(\cdot, X)\}_{k=1}^m\Vert_{\Wc,2\alpha}^p\right] < \infty , \mbox{ for all }p\geq 1.$$
\end{proposition}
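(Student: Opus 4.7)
The plan is to solve \eqref{eq:sde_def_rough} inductively on successive delay intervals, treating each one as a classical (non-delayed) rough differential equation with time-dependent vector fields determined by the solution on previous intervals. Set $I_n := [n h_1, \min((n+1) h_1, T)]$ for $n \geq 0$. On $I_0 = [0, h_1]$, by Remark \ref{rmk-extension} the delayed arguments $\xb_{t-h_i}$ with $i \geq 1$ are frozen at $X_0$ (away from the jump at $t=0^+$), so the equation reduces to
\[
X_t = X_0 + \sum_{k=0}^m \int_0^t \widetilde V_k(r, X_r) \, d\Wc^{k,0}_r,
\]
with $\widetilde V_k(r, y) := V_k(r, y, X_0, \dots, X_0)$ smooth with bounded derivatives. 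Existence and uniqueness of $X \in \Dc^{2\alpha}_\Wc([0, h_1], \R^d)$ with Gubinelli derivative $(\widetilde V_k(\cdot, X))_{k=1}^m$ then follow from the standard Picard contraction in the controlled rough path framework of \cite[Chapter 8]{FH14}, set up using Theorem \ref{thm:continuity_integration}. The Lipschitz condition \eqref{eq:lip_product} is used here to ensure that the composition $r \mapsto V_k(r, X)$ is itself a controlled rough path whose Gubinelli derivative is $\alpha$-Hölder, so that the fixed-point map stabilizes a ball in $\Dc^{2\alpha}_\Wc$.

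For the induction step, assume the solution has been built on $[0, n h_1]$ as a controlled rough path. For $r \in I_n$ and $i \geq 1$, the inequality $r - h_i \leq n h_1$ implies that $r \mapsto X_{r - h_i}$ is already known. Defining
\[
\widehat V_k(r, y) := V_k(r, y, X_{r - h_1}, \ldots, X_{r - h_{N-1}}),
\]
the equation on $I_n$ becomes a non-delayed RDE with time-varying coefficients $\widehat V_k$, whose composition with the unknown inherits the controlled structure from the previous steps. Solving by the same Gubinelli contraction argument on $I_n$ (with initial condition $X_{n h_1}$ inherited from the previous piece) and concatenating over $n = 0, 1, \ldots, \lceil T / h_1 \rceil$ yields the unique controlled rough path solution on $[0, T]$.

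Since the solution is adapted to the Brownian filtration on each $I_n$ by construction, and since $\Wc$ is lifted via Stratonovich iterated integrals (equivalently, Nualart–Pardoux centered Riemann sums), the Gubinelli integral against $W^k$ coincides with the classical Stratonovich integral for adapted integrands; see \cite[Exercise 5.17]{FH14}. Hence $X$ is a semi-martingale and solves the Itô reformulation \eqref{eq:sde_def_ito} in the classical sense. For the moment estimate, one combines the Gaussian tails of $\vertiii{\Wc}_\alpha$ (Fernique-type estimates for lifted Brownian motion) with the local Lipschitz continuity of the Itô–Lyons map to get $\E[\Vert X, \{V_k(\cdot,X)\}_k \Vert_{\Wc,2\alpha}^p] < \infty$ on each $I_n$; concatenating across the finitely many delay intervals preserves integrability, by pasting initial data with finite $L^p$ moments inherited from the preceding step.

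The main obstacle is the self-consistency of the Gubinelli derivative in the inductive step: after the first interval, the function $\widehat V_k(\cdot, X)$ depends on time not only directly but also through the past $r \mapsto X_{r - h_i}$, itself a controlled rough path. One must verify that these composite dependencies combine so that the Gubinelli derivative of the solution is precisely $(V_k(\cdot, X))_{k=1}^m$, and not an enlarged tuple involving time-shifted increments of $\Wc$. This is made possible by the choice of interval length $h_1$, which ensures that no delay $h_i$ with $i \geq 1$ sends a time in $I_n$ back into $I_n$ itself, and by the Lipschitz hypothesis \eqref{eq:lip_product}, which controls the contribution of the $V_0$-drift to the Gubinelli remainder $R_X$ across the concatenation points.
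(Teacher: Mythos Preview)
Your method-of-steps approach is a valid and classical route for delayed equations, and it genuinely differs from the paper's proof, which is two sentences long: invoke \cite[Theorem 8.4]{FH14} for local well-posedness of the RDE and \cite[Proposition 2]{lejay2012global} for global-in-time non-explosion under the analytic assumptions. Your argument is more constructive and explains why the delay structure makes the problem tractable interval by interval; the paper simply outsources both steps.

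Two points deserve tightening. First, your use of \eqref{eq:lip_product} is misplaced in both spots where you invoke it. That condition asserts uniform Lipschitz continuity of $\pa_0 V_k \cdot V_k$, i.e.\ of the Stratonovich-to-It\^o correction appearing in the drift $\widetilde V_0$ of \eqref{eq:sde_def_ito}; its role is to make the It\^o drift globally Lipschitz (equivalently, to meet Lejay's non-explosion hypothesis). It is not what guarantees that $r \mapsto V_k(r,X)$ has an $\alpha$-H\"older Gubinelli derivative --- that follows already from smoothness of $V_k$ with bounded derivatives --- and it plays no special role at the concatenation points $nh_1$. Second, the Picard contraction in \cite[Chapter 8]{FH14} yields only local solutions on each $I_n$; since the $V_k$ are not assumed bounded (only their derivatives are), reaching the endpoint of $I_n$ still requires a non-explosion argument. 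Either cite Lejay at that step, or, more in keeping with your scheme, first solve the It\^o SDE \eqref{eq:sde_def_ito} globally on $I_n$ via its Lipschitz coefficients and then identify it with the rough solution.
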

\begin{proof}
In \cite[Theorem 8.4]{FH14}, it is explained that RDEs with smooth coefficients have locally unique solutions. Moreover, solutions are global in time thanks to \cite[Proposition 2]{lejay2012global}, which gives boundedness under weaker conditions than the Analytical Assumptions \ref{assumptions:analytic} (\cite[Hypothesis 1]{lejay2012global}).
\end{proof}

We now want some sort of Itô formula for processes of the form $$F(t,X_t,X_{t-h_1},\ldots, X_{t-h_{N-1}}).$$ It is easy to see that this process has no reason to be a semi-martingale and as such, we can only give a rough integral formulation of the Itô lemma. Also after formal computations one notices that the Gubinelli derivative of this process are not controlled by $\Wc$ and they cannot be integrated with respect to $\Wc$. However, they are controlled by $\{W_{t-(h_j+h_i)}\}_{0\leq i\leq j\leq N-1}$. 

 Thus we define the family of double delays
$$\overline h:=\{h_{j_1}+h_{j_2}:j_1,j_2=0\dots N-1\}$$
and choose a family of index $J\subset \{0,\ldots, N-1\}^2$ with minimal cardinality such that $\overline h=\{h_{j_1}+h_{j_2}:j=(j_1,j_2)\in J  \}$.
By the construction at Subsection \ref{subsection:rough_paths}, we obtain a rough path $\Wc_t( \overline{h} ):=\{W_{t-(h_{j_1}+h_{j_2})}\}_{j\in J}$ along with its first order iterated integral $\W(\overline{h})$. The following holds:

\begin{proposition}[It\^o formula]
\label{proposition:equi}
Let $F: \R_+ \times E(h) \rightarrow \R^d$ be a smooth function of time and $\left( X_t, X_{t-h_1}, \dots, X_{t-{N-1}} \right)$. The path $t \mapsto F(t,X)$ is controlled by $\Wc(h)$ as the following control equation holds, for all $0\leq s\leq t\leq T$ with $s\notin \{h_j:j=0\ldots N-1\}$, we have:
\begin{align}
\label{eq:control_V_j}
F(t,X)-F(s,X) = & 
\sum_{ \substack{ 1 \leq k \leq m \\
                  0 \leq i \leq N-1 }} 
\pa_i F (s,X) \cdot V_k (s-h_i,X) W^{k}_{s-h_i,t-h_i}
+R_{F}(s,t) \ ,
\end{align} 
with $\E[\|R_{F}\|_{2\alpha}^p]<\infty$ for all $p>0$. 
Moreover, the Gubinelli derivatives of $t \mapsto F(t,X)$ are controlled by $\Wc(\overline h)$ as for all $i=0,\ldots N-1$, $k=1\ldots,m $ and $0\leq s\leq t\leq T$ such that $s\notin \overline h$ we have:
\begin{align}
\label{eq:control_V_j_prime}
&\pa_{i} F (t,X)V_{k} (t-h_{i},X)-\pa_{i} F (s,X)V_{k} (s-h_{i},X)\\
\nonumber
&\quad=\sum_{ \substack{ 1 \leq l \leq m \\
                         0 \leq j \leq N-1
                       }
            }
        \pa_{i,j}^2 F(s,X) \cdot V_{k}(s-h_{i},X) \cdot V_l(s-h_j,X) W_{s-h_j,t-h_j}^{l}\\
\nonumber
&\quad+ \sum_{ \substack{ 1 \leq l \leq m \\
                  0 \leq j \leq N-1 }} \pa_{i}F(s,X) \cdot \left[ \pa_i V_{k}(s-h_{i},X) \cdot V_l(s-h_i-h_{j},X) \right] W_{s-h_{i}-h_j,t-h_{i}-h_j}^{l}\\
\nonumber
&\quad + R_{F,i,k}(t,s)
\end{align}

Finally, we have the following rough integrals against $\left( \Wc( \overline{h} ), \W( \overline{h} ) \right)$:
\begin{align}
\label{eq:ito_vj_rough}
F(t,X)-F(s,X) = & 
\int_s^t 
\sum_{ \substack{ 1 \leq k \leq m \\
                  0 \leq i \leq N-1 }} 
\pa_i F (r,X) \cdot V_k (r-h_i,X) d\Wc^{k,i}_r+\int_s^t \pa_t F (r,X) dr \ .
\end{align}
\end{proposition}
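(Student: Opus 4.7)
The approach is a two-level Taylor expansion combined with the controlled rough path structure of $X$ coming from the SDE \eqref{eq:sde_def_rough}. From the well-posedness proposition, $X$ is controlled by $\Wc(h)$ with Gubinelli derivative $X'_s=\{V_k(s,X)\}_{1\leq k\leq m}$ and remainder $R_X$ satisfying $\E[\|R_X\|_{2\alpha}^p]<\infty$ for every $p$. One level of expansion will produce \eqref{eq:control_V_j}; a second level applied to the composition $\pa_i F(\cdot,X)\cdot V_k(\cdot-h_i,X)$ will yield \eqref{eq:control_V_j_prime} and explain why the double delays in $\overline{h}$ appear. The integral formula \eqref{eq:ito_vj_rough} will follow from Gubinelli's Theorem \ref{thm:continuity_integration} applied with the enlarged rough path $\Wc(\overline{h})$.

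For \eqref{eq:control_V_j} I Taylor-expand
\begin{align*}
F(t,X)-F(s,X)
=\pa_t F(s,X)(t-s) + \sum_{i=0}^{N-1}\pa_i F(s,X)\cdot (X_{t-h_i}-X_{s-h_i}) + Q(s,t),
\end{align*}
where $Q(s,t)$ collects the second-order Taylor terms; by smoothness of $F$ and the $\alpha$-H\"older regularity of $X$ its $L^p$-norm is bounded by $C|t-s|^{2\alpha}$. Substituting the shifted controlled expansion
\begin{align*}
X_{t-h_i}-X_{s-h_i}
=\sum_{k=1}^m V_k(s-h_i,X)\,\Wc^{k,i}_{s,t}+V_0(s-h_i,X)(t-s)+R_X(s-h_i,t-h_i)
\end{align*}
and regrouping, the terms of the form prescribed in \eqref{eq:control_V_j} emerge as the main contribution, while the $\pa_t F$- and $V_0$-drifts, the $R_X$-pieces and $Q$ are absorbed into $R_F(s,t)$. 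Each of these is $O(|t-s|^{2\alpha})$: the drift parts since $t-s\leq T^{1-2\alpha}(t-s)^{2\alpha}$, the $R_X$-pieces by definition, and $Q$ by Taylor's inequality. The moment bound on $\|R_F\|_{2\alpha}$ then follows from boundedness of derivatives of $F$ and $V_k$ combined with the finite moments of $\|X\|_{\Wc,2\alpha}$. The condition $s\notin\{h_j\}$ only avoids the finitely many points where $V_k(\cdot,X)$ has its artificial jump from Remark \ref{rmk-extension}.

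The identity \eqref{eq:control_V_j_prime} is obtained by applying the very same scheme to the composite function $G_{i,k}(t):=\pa_i F(t,X)\cdot V_k(t-h_i,X)$, which depends smoothly on $t$, on $X_{t-h_j}$ for $j=0,\dots,N-1$ (through $\pa_i F$), and on $X_{t-h_i-h_j}$ for $j=0,\dots,N-1$ (through $V_k(t-h_i,\cdot)$). A first-order Taylor expansion in $t$ yields a $(t-s)$ drift piece, a sum over $j$ with factor $X_{t-h_j}-X_{s-h_j}$ and coefficient $\pa^2_{i,j}F(s,X)\cdot V_k(s-h_i,X)$, and a sum over $j$ with factor $X_{t-h_i-h_j}-X_{s-h_i-h_j}$ and coefficient $\pa_i F(s,X)\cdot \pa_j V_k(s-h_i,X)$. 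Substituting the controlled expansion of $X$ into each increment produces $\Wc^{l,j}_{s,t}$ for the first sum (single delay $h_j$) and $\Wc^{l,j}_{s-h_i,t-h_i}=W^l_{s-h_i-h_j,t-h_i-h_j}$ for the second (single delay $h_i+h_j\in\overline{h}$), precisely as in the statement. This is the mechanism by which $\overline{h}$ enters: composition with $V_k(\cdot-h_i,X)$ shifts the delay structure by $h_i$ and doubles the set of delays the Gubinelli derivative depends on. Drift pieces, second-order Taylor terms and $R_X$-remainders are absorbed into $R_{F,i,k}$, whose $2\alpha$-H\"older norm has finite $L^p$-moments by the reasoning of the previous paragraph.

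Finally, \eqref{eq:ito_vj_rough} is the integral form of \eqref{eq:control_V_j}. Since by the previous step the integrand $r\mapsto \pa_i F(r,X)\cdot V_k(r-h_i,X)$ is a controlled rough path with respect to $\Wc(\overline{h})$, and the components $\Wc^{k,i}$ against which one integrates form a subfamily of $\Wc(\overline{h})$, Theorem \ref{thm:continuity_integration} applies and the rough integrals are well-defined; \eqref{eq:control_V_j} is precisely the local increment identity the sewing lemma promotes to the integral equality, once the explicit pure time-drift $\pa_t F(r,X)dr$ is read off separately. The main obstacle I expect is not at the level of ideas but at the level of bookkeeping: one must cleanly separate drift-in-time contributions from the truly rough contributions at each step, keep moment estimates uniform away from the finite exceptional sets $\{h_j\}$ and $\overline{h}$, and verify that the projection from the enlarged rough path $\Wc(\overline{h})$ back to the single-delay components $\Wc^{k,i}$ commutes with Gubinelli integration. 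All of this is routine once the controlled path structure of $X$ and its finite rough-path moments are in hand.
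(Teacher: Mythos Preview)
Your approach is essentially the same as the paper's. For the two control equations \eqref{eq:control_V_j} and \eqref{eq:control_V_j_prime} the paper also simply says ``Taylor expansion and the use of Eq.~\eqref{eq:sde_def_rough}'', which is exactly what you carry out in detail. For the integral identity \eqref{eq:ito_vj_rough} the paper invokes the packaged rough-path It\^o formula \cite[Theorem~7.6]{FH14} rather than arguing directly from sewing as you do, but this is the same argument under a different name.

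One point worth tightening: your sentence ``\eqref{eq:control_V_j} is precisely the local increment identity the sewing lemma promotes to the integral equality'' is not quite right as stated. The first-order control \eqref{eq:control_V_j} only gives $F(t,X)-F(s,X)-Y_s\Wc_{s,t}=O(|t-s|^{2\alpha})$, whereas to identify $F(t,X)-F(s,X)$ (minus drift) with the rough integral $\int Y\,d\Wc$ via sewing you must match the germ $Y_s\Wc_{s,t}+Y'_s\W_{s,t}$ to order $3\alpha>1$. That requires a genuine second-order Taylor expansion of $F$ together with Chen's relation, and it is here that the geometric nature of the lift matters: the symmetric part of $\W$ cancels exactly against the Hessian term, so no It\^o correction appears. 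The paper makes this explicit by checking hypothesis (7.8) of \cite{FH14} and noting that the bracket $[\Wc]$ vanishes for geometric rough paths. Your write-up would be complete once you insert this second-order step.
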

\begin{proof}
Both control equations hold by virtue of a Taylor expansion and the use of Eq. \eqref{eq:sde_def_rough}.
Eq. \eqref{eq:ito_vj_rough} is obtained by invoking the rough path It\^o's formula given in \cite[Theorem 7.6]{FH14}. The first hypothesis required is that $F$ and its Gubinelli derivative are controlled by $\left( \Wc(\overline h), \W(\overline h) \right)$, which is a consequence of the two control equations \eqref{eq:control_V_j} and \eqref{eq:control_V_j_prime}. The second hypothesis \cite[p.100 Eq.(7.8)]{FH14} requires the computation of a Taylor expansion at order $2$. Notice that since we use geometric rough paths, the bracket $\left[ \Wc \right]$ defined in \cite[Definition 5.5]{FH14} is zero.
\end{proof}
\begin{rmk}
\label{rmk:handlingDiscontinuities}
Following Remark \ref{rmk-extension} and the choice of extension for $V_k$, the equalities \eqref{eq:control_V_j} and \eqref{eq:control_V_j_prime} for Gubinelli derivatives are only stated on open intervals between their successive delays. 
\end{rmk}

\section{The Malliavin derivative}
\label{section:derivatives}

In the context of performing probabilistic constructions and estimating densities, one needs to be able to differentiate with respect to Brownian trajectories. This contribution of Malliavin brought functional analysis to probability.

\subsection{Derivatives}

Let us start by the general notion of Fr\'echet derivative of a functional (see \cite[p.128]{kriegl-michor1997}):
\begin{definition}[\bf Fr\'echet derivative ]
\label{frechet-derivative}
For $H$ any vector subspace of $\Cc := \Cc\left( [0,T], \R \right)$, the continuous functions from $[0,T]$ to $\R$, and $F: \Cc \to \R^d$, we define $DF(\xb)(\varphi)\in\R^d$, the Fr\'echet derivative of $F$ at point $\xb \in \Cc$ and direction $\varphi\in H$, as the limit 
\bea
DF(\xb)(\varphi):=\lim_{\varepsilon \to 0 }\frac{F(\xb+\varepsilon \varphi)-F(\xb)}{\varepsilon}\in\R^d \ ,
\eea
when it exists. The limit needs to hold, uniformly in $\varphi$ belonging to the unit ball. For ease of notation, if $F$ takes functions in $\Cc^d$ as input, i.e from $[0,T]$ to $\R^d$, then for all $\varphi \in \Cc^d$, we also define the Fr\'echet derivative matrix $DF(\xb)(\varphi) \in M_d(\R)$ as the matrix whose $j$-th column is 
$$ \lim_{\varepsilon\to 0 }\frac{F(s,\xb+\varepsilon(\varphi)_j)-F(s,\xb)}{\varepsilon} \in \R^d$$
where $(\varphi)_j\in \Cc$ is the path of the $j$-th column of $\varphi$. The functional $F$ is said to be Fréchet differentiable if the Fréchet derivative exists and is a bounded linear operator. The operator norm is the supremum norm.
\end{definition}

In the particular case of Brownian motion, we obtain the Malliavin derivative.  In order to define the Malliavin derivative we introduce the Cameron-Martin space
$$
H:=\left\{ \varphi \in L^0([0,T]; \R^m): \ \varphi' \in L^2([0,T]; \R^m), \ \varphi(0) = 0 \right\}.
$$
Let $F$ be an $\R^d$-valued smooth functional on $\Cc\left( [0,T], \R^m \right)$, evaluated on the Brownian motion $W$. The Malliavin derivative of $F$ applied to $\varphi \in H$ is defined as the Fréchet differential of $F$:
$$ \Dc F \cdot \varphi := \lim_{\ve\to 0}\frac{F(W+\ve \varphi)-F(W)}{\ve} \in \R^d  \ .$$
The iterated Malliavin derivatives $\Dc^j$ are defined in the same fashion from higher order Fréchet differentials. As such, $\Dc^j$ is seen as acting on random variables which are smooth functions of $W$. Then $\Dc^j$ is extended to the domain $\D^{j,p}$ in $L^p(\Omega), \ p \geq 1$, with respect to the norm:
$$ \left\| F \right\|_{j,p} = \left[ \E \left(|F|^p\right) + \sum_{k=1}^j \E\left( \left| \Dc^k F \right|^p_{H^{\otimes k}} \right)\right]^{\frac{1}{p}}$$
Moreover, we write:
$$ \D^{j, \infty} := \cap_{p \geq 1} \D^{j, p} \ .$$
For further details we refer to \cite[Section 1.2]{Nualart2006}.

A standard notation is to represent the Mallavin derivative as an element in the Cameron-Martin space 
$$ \Dc F = \left( \Dc_t^1 F, \dots, \Dc_t^m F \right)_{0 \leq t \leq T}$$
and write:
$$ \Dc F \cdot h = \sum_{j=1}^m \int \Dc_t^j F \ \langle f_j^*, h'(t) \rangle dt \ ,$$
where $\left( f^*_j \right)_{1 \leq j \leq n}$ is the basis dual to the standard basis $\left( f_j \right)_{1 \leq j \leq n}$ of $\R^m$ and $\langle \cdot, \cdot \rangle$ is the duality bracket.

{\it Morally}, at time $t$ and for $j=1,\cdots, m$,  the operator $\Dc_t^j$ is given by:
$$ \Dc^j_t F := \lim_{h\to 0}\frac{F(W+\ve \mathds{1}_{[t,T]} f_j)-F(W)}{\ve}\in\R^d  \ .$$
We denote by $\Dc_t F \in M_{d,m}\left( \R \right)$ the matrix whose $j$-th column is $\Dc^j_t F$. The following proposition sums up the properties of the Malliavin derivative in our context.

\begin{proposition}[Kusuoka-Stroock \cite{KS1984}]
For all $t \leq r$, the random variable $X_r$ belongs to the space $\D^{1, \infty}$. Moreover, for all $j=1\ldots m$, the Malliavin derivative $\Dc_r^j X_t$ of the random variable $X_t$ satisfies:
\begin{align}
\label{malliavin-x}
\Dc_r^j X_t & = {V}_j (r,X) + \sum_{k=0}^m \int_{r}^t  D\widetilde{V}_k (s,X)\left(\Dc_r^j X_{\cdot} \right) dW_s^k.
\end{align}
One also has $\Dc_r^j X_t=0$ if $t<r$, as $X_r$ is adapted. In matrix notation one has 
\begin{align*}
\Dc_r X_t   & = {V}(r,X) + \sum_{k=0}^m \int_r^t D\widetilde{V}_k (s,X)\left(\Dc_r   X_{\cdot}\right) dW_s^k
\end{align*}
where $V(r,X)$ is the matrix whose columns are $V_j(r,X)$ for $j=1\ldots m$. 
\end{proposition}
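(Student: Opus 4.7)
My plan is to adapt the standard Malliavin-calculus Picard scheme of \cite[Ch.~2]{Nualart2006} to the delay setting, working exclusively with the It\^o formulation \eqref{eq:sde_def_ito}. Under Assumption \ref{assumptions:analytic} together with the Lipschitz condition \eqref{eq:lip_product} on $\pa_0 V_k \cdot V_k$, the modified drift $\widetilde V_0 = V_0 + \half \sum_k \pa_0 V_k \cdot V_k$ is Lipschitz in the path variable uniformly in time, while each $\widetilde V_k = V_k$ ($k \geq 1$) is smooth with bounded derivatives. I therefore set up Picard iterates $X^0_t := X_0$ and
$$X^{n+1}_t := X_0 + \sum_{k=0}^m \int_0^t \widetilde V_k(s, X^n) \, dW^k_s,$$
with the convention $W^0_s = s$. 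Standard BDG plus Gronwall arguments give $X^n \to X$ in $L^p(\Omega; \Cc([0,T]; \R^d))$ for every $p < \infty$.

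I then prove by induction on $n$ that $X^n_t \in \D^{1, p}$ for every $p < \infty$ and that
$$\Dc^j_r X^{n+1}_t = \widetilde V_j(r, X^n)\, \mathds{1}_{r \leq t} + \sum_{k=0}^m \int_r^t \sum_{i=0}^{N-1} \pa_i \widetilde V_k(s, X^n) \cdot \Dc^j_r X^n_{s-h_i} \, dW^k_s.$$
The two ingredients are the chain rule applied to $\widetilde V_k(s, X^n)$---which, being a smooth function of the finite tuple $(s, X^n_s, X^n_{s-h_1}, \dots, X^n_{s-h_{N-1}})$, yields $\Dc^j_r[\widetilde V_k(s, X^n)] = \sum_i \pa_i \widetilde V_k(s, X^n) \cdot \Dc^j_r X^n_{s-h_i}$---and the commutation of $\Dc_r$ with It\^o stochastic integrals against $W$. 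The boundary term $\widetilde V_j(r, X^n) \mathds{1}_{r \leq t}$ arises from differentiating the equation with respect to the Cameron--Martin perturbation $\varepsilon \mathds{1}_{[r, T]} f_j$ of the Brownian path, and equals $V_j(r, X^n) \mathds{1}_{r\leq t}$ since $j \geq 1$.

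Next I derive a uniform-in-$n$ estimate $\sup_n \E \sup_{t \leq T} |\Dc_r X^n_t|^p < \infty$ and invoke closability of $\Dc$ to pass to the limit, obtaining $X_t \in \D^{1, \infty}$ together with the identity \eqref{malliavin-x}. The adaptedness of $X$ and the causality of the recursion make $\Dc^j_r X_t = 0$ for $t < r$ automatic. Higher-order regularity $X_t \in \D^{k, \infty}$ is obtained by iterating exactly the same procedure on the linearized equation, which is itself a smooth-coefficient linear delayed SDE, so the argument propagates to all orders.

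The hard part---and the only genuine departure from the Markovian Kusuoka--Stroock argument \cite{KS1984}---is the Gronwall step used to establish the uniform bound. Because the right-hand side of the linearized equation involves the \emph{past} values $\Dc^j_r X^n_{s-h_i}$, a one-shot Gronwall does not close. My plan is to iterate the estimate over successive windows $[0, h_1], [h_1, 2h_1], \dots$: on each window, the delayed contributions are already controlled by the bound obtained on the previous window and are thus treated as a known forcing term, after which BDG plus a standard Gronwall closes the estimate with a constant depending only on $\Vert \pa_i \widetilde V_k \Vert_\infty$ and the Lipschitz constant of $\widetilde V_0$. Concatenating approximately $T/h_1$ windows yields a bound uniform in the Picard index $n$, which is what feeds into the limiting argument.
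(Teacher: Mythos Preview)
Your approach is correct and is essentially the explicit version of what the paper does: the paper simply invokes \cite[Lemma~(2.9)]{KS1984} for the Malliavin regularity of $X_t$, and then performs the identification step via Fubini to extract the kernel $\Dc^j_r X_t$ from the Hilbert--Schmidt operator $h \mapsto \Dc^j X_t \cdot h$. Your Picard iteration is the standard way KS1984 (and \cite[Theorem~2.2.1]{Nualart2006}) actually establish that lemma, so the two routes coincide in substance; you are just spelling out what the paper delegates to the reference.

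One correction: your claim that ``a one-shot Gronwall does not close'' is wrong. The delayed terms $\Dc^j_r X^n_{s-h_i}$ are \emph{past} values, hence dominated by $\sup_{u \leq s} |\Dc^j_r X^n_u|$. Setting $\phi_n(t) := \E\bigl[\sup_{s \leq t} |\Dc_r X^n_s|^p\bigr]$ and applying BDG to your recursion gives $\phi_{n+1}(t) \leq C_1 + C_2 \int_0^t \phi_n(s)\,ds$, and iterating yields $\sup_n \phi_n(T) \leq C_1 e^{C_2 T}$ directly, with no windowing needed. Your interval-by-interval scheme over $[0,h_1], [h_1,2h_1],\dots$ also works, but it is an unnecessary detour and slightly misleads as to where the difficulty lies: the delay structure creates no new obstacle at the level of Malliavin regularity---the genuine non-Markovian obstruction in this paper appears later, in the failure of the splitting $J_{s,T}=J_{0,T}J_{0,s}^{-1}$ for $s<T_h$, not here.
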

\begin{proof}[Pointers to the proof]
This is essentially \cite[Lemma (2.9)]{KS1984}. Note that the latter reference uses Itô's formulation. Thus we start with the equation \eqref{eq:sde_def_ito} and see that it satisfies the assumptions in \cite[Lemma (2.9)]{KS1984}. 

Due to the analytical assumptions \ref{assumptions:analytic} the functions $\widetilde{V}_k$ admits Fréchet derivatives at all order and for all $h\in H$ the Malliavin derivatives $( \Dc X_s(h))_{s\in [0,T]}$ solves the SDE 
\begin{align}
\Dc^j X_t \cdot h = \int_0^t {V}_j(r,X)h'_r dr+\sum_{k=0}^m \int_0^t D\widetilde{V}_k(s,X)(\Dc^j X_\cdot \cdot h ) dW_s^k \ .
\end{align}
Additionally, as proven by Kusuoka and Stroock, the mapping $\Dc^j X_t$ is a Hilbert-Schmidt operator on $H$ hence the existence of $\{\Dc_r^j X_t\}_{r\in[0,T]}\in L^2([0,T]:\R^d)$ satisfying
$$
\Dc^j X_t(h)=\int_0^t \left[ \Dc_r^j X_t \right] h'_r dr \mbox{ for all }h\in H.
$$
Note that the equality  
$$
D\widetilde{V}_k(s,\xb)(\xb' ) = \sum_{i=0}^{N-1} \pa_i \widetilde V_k (s,\xb) \cdot \xb'_{s-h_i}
$$
implies thanks to Fubini
\begin{align*}
\int_0^t \Dc^j_r X_t h'_r dr&=\int_0^t {V}_j(r,X)h'_r dr+\sum_{k=0}^m\sum_{i=0}^{N-1} \int_0^t\int_0^t \pa_i\widetilde{V}_k(s,X) \cdot \Dc^j_r X_{s-h_i} dW_s^k h'_rdr\\
\notag&=\int_0^t {V}_j(r,X)h'_r dr + \sum_{k=0}^m \int_0^t\int_0^t D\widetilde{V}_k(s,X) (\Dc^j_r X_\cdot) dW_s^k h'_rdr
\end{align*}
which implies by identification \eqref{malliavin-x}.
\end{proof}

\subsection{Factorization of the Malliavin derivative}

The main result of this section concerns a factorization of the Malliavin derivative.

\begin{proposition}
\label{proposition:malliavin_derivative_expr}
Define the family of processes $\left( J_{r,t} ; 0 \leq r \leq t \leq T \right)$ as the solution to the SDE taking values in $M_d(\R)$:
\begin{align}
\label{eq:flow1}
J_{r,t} & = \id + \sum_{k=0}^m \int_{r}^t  D\widetilde{V}_k (s,X)\left( J_{r, \cdot} \right) dW_s^k ,\ \mbox{ for } r \leq t, \\
\nonumber 
J_{r,t} & = 0, \mbox{ for } r > t  \ .
\end{align}
Here $\id$ stands for the identity matrix. Then, for $0\leq r \leq t\leq T$, the tangent process and the Malliavin derivative satisfy 
\bea\label{equation-tangent-malliavin}
 J_{r, t}  \times  V\left( r,X \right)
 = \Dc_r \left( X_t \right) 
 \eea
 where on the left-hand side, the product denotes a matrix product.
\end{proposition}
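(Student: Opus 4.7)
The plan is to prove the identity by a uniqueness argument for linear SDEs with random path-dependent coefficients. Both sides of \eqref{equation-tangent-malliavin} will be shown to satisfy the \emph{same} linear SDE in the variable $t$ (with $r$ frozen) and to share the same value at $t=r$; unique solvability of this linear SDE then forces equality.

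First, I would observe the initial conditions. At $t=r$, the defining equation \eqref{eq:flow1} gives $J_{r,r} = \id$, so $J_{r,r} \cdot V(r,X) = V(r,X)$. On the other side, evaluating \eqref{malliavin-x} at $t=r$ yields $\Dc_r X_r = V(r,X)$. Initial conditions therefore match column by column.

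Next, I would propagate this matching through the dynamics. Since $V(r,X)$ is $\Fc_r$-measurable and constant in $t$, multiplying \eqref{eq:flow1} on the right by $V(r,X)$ gives, for $r\leq t\leq T$:
\begin{align*}
J_{r,t}\cdot V(r,X)
&= V(r,X) + \sum_{k=0}^m \int_r^t \bigl[D\widetilde V_k(s,X)(J_{r,\cdot})\bigr]\cdot V(r,X)\, dW_s^k .
\end{align*}
The crucial observation is that the Fr\'echet derivative $D\widetilde V_k(s,X)$ is a bounded \emph{linear} operator acting on paths, so it commutes with right-multiplication by the constant-in-time matrix $V(r,X)$:
\begin{align*}
\bigl[D\widetilde V_k(s,X)(J_{r,\cdot})\bigr]\cdot V(r,X)
= D\widetilde V_k(s,X)\bigl(J_{r,\cdot}\cdot V(r,X)\bigr).
\end{align*}
Concretely this is the identity
$\sum_i \pa_i \widetilde V_k(s,X)\cdot J_{r,s-h_i}V(r,X) = \sum_i \pa_i\widetilde V_k(s,X)\cdot (JV)_{r,s-h_i}$, which is immediate from linearity of each $\pa_i\widetilde V_k(s,X)$. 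Consequently $Y_t := J_{r,t}\cdot V(r,X)$ solves exactly the same linear SDE \eqref{malliavin-x} as $\Dc_r X_t$, with the same initial datum $V(r,X)$ at $t=r$.

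Finally, I would invoke the well-posedness of this linear SDE to conclude $Y_t = \Dc_r X_t$ for all $t\in[r,T]$. Existence and uniqueness for such a linear path-dependent equation follow from the analytic assumptions \ref{assumptions:analytic} via a Picard iteration argument of exactly the same flavor as the one used for well-posedness of \eqref{eq:sde_def_ito} in Subsection \ref{subsection:wellposedness}: the coefficients $D\widetilde V_k(s,X)$ are uniformly bounded and Lipschitz in the path variable (locally in $\omega$), which allows a Gronwall closure on each column of the unknown matrix-valued process. The only delicate point to articulate carefully is this linearity-and-commutation step; the rest is a standard application of uniqueness, and so I do not expect genuine obstacles beyond bookkeeping of the matrix dimensions ($J_{r,t}\in\Lc(\R^d,\R^d)$, $V(r,X)\in \Lc(\R^m,\R^d)$, $\Dc_r X_t\in\Lc(\R^m,\R^d)$).
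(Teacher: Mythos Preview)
Your proof is correct and follows exactly the same approach as the paper: both argue that the SDE \eqref{eq:flow1} is linear in its initial condition, so right-multiplying by the $\Fc_r$-measurable constant $V(r,X)$ produces a process solving the same linear SDE as $\Dc_r X_t$ with the same initial datum, and uniqueness concludes. Your version spells out the commutation step and the uniqueness invocation in more detail than the paper's terse paragraph, but the argument is identical.
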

\begin{proof}
Inspecting equations \eqref{malliavin-x} and \eqref{eq:flow1}, we recognize the same stochastic differential equation with a different initial condition. The starting condition $ V_j\left( r, X\right)$ in equation \eqref{malliavin-x} is replaced by the constant $f_j$ in the matrix equation \eqref{eq:flow1}. The equation \eqref{eq:flow1} is linear in the original condition and one can multiply by the constant $  V_j\left( r,X \right)$ to identify the Malliavin derivative and the multiplied flow. Hence we obtain the result.
\end{proof}

\begin{rmk}
\label{rmk:tangent_markov}
i) In the Markovian setting, let $X_{\cdot}^{t,x}$ be the solution to \eqref{eq:sde_def} such that $X_{t}^{t,x}=x$. By uniqueness of the solution, there exists flow maps 
$$\left( \Phi_{t,s}: \R^d \rightarrow \R^d \right)_{0 \leq t \leq s \leq T}$$
such that $\Phi_{t,s}(x) = X_s^{t,x}$. It is well-known that $\Phi$ are in fact flows of diffeomorphisms. We recommend the works of Kunita for example (\cite{K84} and \cite[Chapter 4]{kunita1997}). The tangent process is a process of invertible linear maps $J_{t,T}(x): \R^d \rightarrow \R^d$ obtained via:
$$ \forall H \in \R^d, \  J_{t,T}(x) \cdot H 
                       := \lim_{\varepsilon \rightarrow 0} \frac{X_T^{t, x+\varepsilon H} - X_T^{t,x}}{\varepsilon}
                        = d \Phi_{t,T} (x) \cdot H \ .$$
Here $d$ stands for the usual differential. Moreover, we have:
\begin{align*}
J_{t,T}(x) = & \id + \sum_{k=0}^m \int_t^T 
               \left[ \frac{\partial \widetilde{V_k}}{\partial x}\left( r, X_r^{t,x} \right) \cdot J_{t,r}(x) \right] dW_r^k \ ,
\end{align*}
which is virtually the same equation as \eqref{eq:flow1} only that Fréchet derivatives of the vector fields have replaced the usual derivative. Here $J_{r,t}$ can be understood as the sensitivity of $X_t$ to a variation of the point $X_r$. It is also well-known that in the Markovian framework the equality 
$$
J_{t, T}(X_r)  \times  V\left( r,X_r\right)
 = \Dc_t \left( X_T \right)
 $$
 holds.
 
 ii) Note that we do not endow $\{J_{r,t}\}$ with the classical interpretation of derivative of the flow here. This family is only defined as the solution of \eqref{eq:flow1}. 
\end{rmk}

\subsection{Analysis \texorpdfstring{on $[T_h, T]$}{near T} }

Classically, the Lie bracket in the H\"ormander's condition appears through the evolution of $J^{-1}_{t,s}$ together with the vector fields $V_k(t,X)$. As such it will be crucial to understand the evolution of $J^{-1}_{t,s}$. However, in the non-Markovian framework the matrix-valued process $J_{t,s}$ might fail to be invertible at all times. 

However thanks to the delay structure, a perturbation of $X$ at time $t \in [T_h, T]$ will affect $X_T$ through only $\pa_{0} V_k (s,X)$. This is seen in the simplification of Eq. \ref{eq:flow1} on the interval $[T_h,T]$. We treat it in the following proposition.

\begin{proposition}
\label{proposition:tangent_flow_sde} 
$J$ satisfies the following SDE, for $T_h\leq t \leq s \leq T$
\begin{align}
\label{eq:tangent_flow_sde}
J_{t,s} &= \id +\sum_{k=0}^m \int_{t}^s  \pa_0 \widetilde{V_k} (r,X) \cdot J_{t,r} dW_r^k \ ,
\end{align}
where  we take the convention that $J_{t,r}=0$ of $r<t$. Moreover, $\{J_{t,r}\}_{T_h \leq t\leq r \leq T}$ exists for all time and satisfies the following moment bounds:
$$ \forall p \geq 1, \, \E\left( \sup_{T_h \leq t \leq r \leq T} \lVert J_{t,r} \rVert^p \right) < \infty$$
\end{proposition}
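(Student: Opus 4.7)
The key observation is a domain-of-dependence argument. On the interval $[T_h, T]$ the delayed channels of the vector fields $\widetilde V_k$ can only ``look back'' to times that precede $t$. More precisely, for $t \in [T_h, T]$, $s \in [t,T]$, $r \in [t,s]$ and an index $i \geq 1$, we have
$$ r - h_i \ \leq \ r - h_1 \ \leq \ T - h_1 \ = \ T_h \ \leq \ t,$$
with strict inequality except on the null set where $r = T$ and $i = 1$ (possible only at $t = T_h$). Since the convention in \eqref{eq:flow1} is $J_{t, u} = 0$ for $u < t$, this forces $J_{t, r - h_i} = 0$ in the time integral for every $i \geq 1$, almost surely in $r$.

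Starting from the defining SDE \eqref{eq:flow1} restricted to $[t, s] \subset [T_h, T]$, I would expand the Fréchet derivative of $\widetilde V_k$ as
$$ D\widetilde V_k(r, X)\bigl(J_{t, \cdot}\bigr) \ = \ \sum_{i=0}^{N-1} \pa_i \widetilde V_k(r, X) \cdot J_{t, r - h_i},$$
as already used in the proof of \eqref{malliavin-x}. By the previous paragraph, the only surviving term in the sum is $i = 0$, yielding
$$ D\widetilde V_k(r, X)\bigl(J_{t, \cdot}\bigr) \ = \ \pa_0 \widetilde V_k(r, X) \cdot J_{t, r}$$
for almost every $r \in [t, s]$. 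Substituting this identity into \eqref{eq:flow1} gives exactly \eqref{eq:tangent_flow_sde}.

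For well-posedness and the $L^p$ bounds, the decisive point is that \eqref{eq:tangent_flow_sde} is now a \emph{linear} Itô SDE in $J_{t, \cdot}$, matrix-valued, whose coefficients $r \mapsto \pa_0 \widetilde V_k(r, X)$ are adapted and uniformly bounded by a deterministic constant thanks to Assumption \ref{assumptions:analytic} (smoothness with bounded derivatives of every order, and the fact that the Itô correction preserves this boundedness). Standard Picard iteration produces a unique strong solution on $[t, T]$, and Burkholder--Davis--Gundy together with Grönwall's inequality yield, for each $p \geq 1$, a deterministic constant $C_p$ depending only on $p$, $T - T_h \leq h_1$ and $\sup_k \Vert \pa_0 \widetilde V_k \Vert_\infty$, such that
$$ \E\Bigl( \sup_{t \leq r \leq T} \Vert J_{t, r} \Vert^p \Bigr) \ \leq \ C_p.$$
Taking the supremum over $t \in [T_h, T]$ and applying BDG once more on the doubly indexed family (or, more simply, observing that $C_p$ is uniform in $t$) delivers the stated moment bound.

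The main obstacle is conceptual rather than technical: one must be careful that the ``$J_{t, u} = 0$ for $u < t$'' convention is compatible with the a.s.\ sense of the stochastic integral, so that discarding the $i \geq 1$ terms truly produces an \emph{equivalent} equation and not merely a heuristic simplification. Once this measurability/null-set bookkeeping is in place, all remaining estimates are classical linear-SDE estimates.
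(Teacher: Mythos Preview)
Your proof is correct and follows precisely the approach the paper signals in the paragraph immediately preceding the proposition; the paper itself gives no explicit proof, treating the simplification of \eqref{eq:flow1} on $[T_h,T]$ as self-evident once one notices that the delayed arguments $r-h_i$ fall before $t$.

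One small point deserves tightening: in the final step, having a bound $\sup_{t\in[T_h,T]}\E\bigl(\sup_{r}\lVert J_{t,r}\rVert^p\bigr)\leq C_p$ uniform in $t$ is \emph{not} the same as $\E\bigl(\sup_{t,r}\lVert J_{t,r}\rVert^p\bigr)<\infty$, and ``observing that $C_p$ is uniform in $t$'' does not close that gap. To finish cleanly you can either invoke the multiplicative splitting $J_{t,r}=J_{T_h,r}J_{T_h,t}^{-1}$ for the linear equation (which the paper uses in the very next proposition) and bound the double supremum by a product of two single-parameter suprema, or run a Kolmogorov continuity argument in the parameter $t$.
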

We also give the following proposition allowing us to differentiate $J_{t,T}$ in $t$. 
\begin{proposition}
For all $T_h<s<t<T$ we have the following relation,
\label{j-before-T}
\begin{equation}
\label{jt-rde}
 J_{t,T}-J_{s,T}=-\sum_{k=0}^m \int_{s}^t J_{r,T} \cdot \pa_{0} V_k (r,X)  d \Wc_r^{k,0} \ 
\end{equation}
where the integral is understood as a rough integral with respect to $\Wc$. 
\end{proposition}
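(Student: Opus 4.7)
The plan is to reduce the identity to three ingredients: (i) the flow/cocycle property for the linear RDE \eqref{eq:tangent_flow_sde}, (ii) the invertibility of $J_{s,t}$ together with the SDE satisfied by its inverse, and (iii) a controlled-rough-path identification of the resulting Stratonovich-type integral with the rough integral against $\Wc^{k,0}$.

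I will first establish, for any $T_h \leq s \leq t \leq u \leq T$, the cocycle identity $J_{s,u} = J_{t,u}\cdot J_{s,t}$. This is standard: both sides solve the same linear RDE \eqref{eq:tangent_flow_sde} in the variable $u$ on $[t,u]$, with common value $J_{s,t}$ at $u=t$, so uniqueness (Proposition \ref{proposition:tangent_flow_sde}) yields equality. In particular $J_{s,t}$ is invertible for $T_h \leq s \leq t \leq T$, and setting $K_{s,t}:=J_{s,t}^{-1}$, rewriting \eqref{eq:tangent_flow_sde} in Stratonovich/rough form and differentiating $J_{s,t}K_{s,t}=\id$ by the Leibniz rule (available at the rough-path level because $\Wc$ is geometric) yields
$$K_{s,t} = \id - \sum_{k=0}^m \int_s^t K_{s,r}\cdot \pa_0 V_k(r,X) \circ dW_r^k.$$

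Combining these two observations gives $J_{t,T} = J_{s,T}\cdot K_{s,t}$, and substituting the above equation together with the cocycle identity $J_{s,T}K_{s,r} = J_{s,T}J_{s,r}^{-1} = J_{r,T}$ yields
$$J_{t,T}-J_{s,T} = J_{s,T}(K_{s,t}-\id) = -\sum_{k=0}^m \int_s^t J_{r,T}\cdot \pa_0 V_k(r,X) \circ dW_r^k,$$
which is the claimed identity, but written as a Stratonovich rather than a rough integral. To promote it to the rough integral in \eqref{jt-rde}, I will observe that the identity just derived exhibits $r\mapsto J_{r,T}$ as a controlled rough path with respect to $\Wc$, with Gubinelli derivative $-J_{r,T}\cdot\pa_0 V_k(r,X)$ along $\Wc^{k,0}$. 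Combined with Proposition \ref{proposition:equi} applied to $\pa_0 V_k(\cdot,X)$, this shows that the integrand $r\mapsto J_{r,T}\cdot \pa_0 V_k(r,X)$ is itself controlled by $\Wc$, so Theorem \ref{thm:continuity_integration} defines the rough integral in \eqref{jt-rde}, and the standard equivalence between Gubinelli integration against the Brownian rough lift and (anticipative) Stratonovich integration supplies the identification.

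The main obstacle I expect is the rigorous justification of the inverse-flow equation for $K_{s,t}$: since the integrand $\pa_0 V_k(\cdot,X)\cdot J_{s,\cdot}$ appearing in \eqref{eq:tangent_flow_sde} is not itself a semi-martingale, the classical It\^o product rule for matrices is unavailable, and the derivation of the equation for $K_{s,t}$ has to be carried out at the level of controlled rough paths, relying crucially on the geometric nature of $\Wc$ to encode the first-order Leibniz rule.
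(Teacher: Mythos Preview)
Your proposal is correct and follows essentially the same route as the paper: exploit the multiplicative (cocycle) structure of the linear RDE \eqref{eq:tangent_flow_sde}, differentiate the inverse via the chain rule $dJ^{-1}=-J^{-1}\,dJ\,J^{-1}$, and recombine to obtain \eqref{jt-rde}. The only cosmetic differences are that the paper anchors the splitting at the fixed time $T_h$, writing $J_{s,T}=J_{T_h,T}J_{T_h,s}^{-1}$, and carries out all computations directly as rough integrals against $\Wc^{k,0}$; by contrast you anchor at the variable point $s$ and pass through a Stratonovich formulation before identifying with the rough integral. Working in the rough framework from the outset, as the paper does, spares you the final identification paragraph and the worry about anticipative integrands, since the Leibniz rule for geometric rough paths and the linearity of Gubinelli integration in constants handle everything.
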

\begin{proof}
By direct computation we see that the SDE \eqref{eq:tangent_flow_sde} can be written on $T_h \leq s \leq t \leq T$ as the rough differential equation (RDE):
$$ J_{s,t} = \id + \int_s^t \sum_{k=0}^m \pa_{0} V_k(r,X) \cdot J_{s,r} d\Wc_t^{k,0} \ ,$$
and because we are dealing with a linear RDE, $\left( J_{s,T} ; T_h \leq s \leq T \right)$ remains invertible and we have the splitting:
\begin{align}\label{splitting}
J_{s,T} = J_{T_h, T} J_{T_h, s}^{-1} \ .
\end{align}
Applying the chain rule for $dJ^{-1} = - J^{-1} dJ J^{-1}$, we have:
\begin{align*}
    J_{t,T} - J_{s,T} 
= & J_{T_h, T} J_{T_h, t}^{-1} - J_{T_h, T} J_{T_h, s}^{-1}\\
= & -J_{T_h, T} \int_s^t J_{T_h, r}^{-1} \cdot \left[ \sum_{k=0}^m \pa_0 V_k(r,X) \cdot J_{T_h, r}^{-1} \right] \cdot J_{T_h, r}^{-1}d \Wc^{k,0}_r \\
= & -J_{T_h, T} \sum_{k=0}^m \int_s^t J_{T_h, r}^{-1} \cdot \pa_0 V_k(r,X) d \Wc^{k,0}_r \\
= & -\sum_{k=0}^m \int_{s}^t J_{r,T} \cdot \pa_0 V_k (r,X)d\Wc_r^{k,0} \ .
\end{align*}
\end{proof}

\begin{rmk}
The splitting property \eqref{splitting} is one of the main limitations of this paper. This property gives the invertibility of $\{J_{s,T}\}_{s\in[T_h,T]}$ and its regularity in $s$. This property does not hold for $s\leq T_h$ since there would be an extra noise coming from the delays. Additionally, when $V_k$ is a general path-dependent functional there is no obvious way to obtain the invertibility of $J_{s,T}$ and its regularity in $s$. 
\end{rmk}

\section{Malliavin's argument: smoothing by Gaussian noise}
\label{section:malliavin_argument}

The gist of Malliavin's argument is that the random variable $X_T$ is a complicated function of the Brownian motion $W$. Provided that such a map is smooth enough, and because Gaussian noise is smooth, one expects $X_T$ to have a smooth density. The quantity that encodes this dependence is the Malliavin matrix $\Mc_{0,T} \in M_d(\R)$ which is defined as:
\begin{align}
\label{eq:malliavin_matrix}
 \Mc_{0,T}:= & \int_0^T \Dc_s X_T \left( \Dc_s X_T \right)^* ds=\left\{ \int_0^T  \sum_{k=1}^m \Dc_s^k \left(X_T\right)^i \Dc_s^k \left(X_T\right)^j ds\right\}_{i,j}\ .
\end{align}
It is morally a Gram matrix or a covariance matrix of the sensitivities of $X_T$ to the Brownian motion $W$. The norm of its inverse will control the smoothness of the map $W \mapsto X_T$. Let $\eta \in \R^d$ such that $\left| \eta \right|_{\R^d}=1$. 
 Thanks to Proposition \ref{proposition:malliavin_derivative_expr} we have:
\begin{align*}
    \langle \eta, \Mc_{0,T} \eta \rangle_{\R^d}
= & \int_0^T \langle \eta,  \Dc_s X_T \left( \Dc_s X_T \right)^* \eta \rangle_{\R^d} ds\\
\geq & \int_{T_h}^T \left| V(s,X)^* J_{s,T}^* \eta \right|_{\R^m}^2 ds\\
= & \sum_{j=1}^m \int_{T_h}^T \left| \eta^* J_{s,T} V_j(s,X) \right|_{\R^d}^2 ds \ .
\end{align*}

\begin{rmk}
In the Markovian case, it is very convenient to introduce the reduced Malliavin matrix $\Cc_{0,T}$ such that $\Mc_{0,T} = J_{0,T} \Cc_{0,T} J_{0,T}^*$. In that case, tangent processes have the multiplicative property 
$$ J_{s,T} = J_{0,T} J_{0,s}^{-1} \ ,$$
and one obtains:
\begin{align}
\label{eq:malliavin_matrix_reduced}
 \Cc_{0,T} := & \int_0^T J_{0, s}^{-1}
                         V\left( s, X\right)
                         \left( V\left( s,X\right) \right)^* 
                         \left( J_{0, s}^{-1} \right)^*
                          ds \ ,
\end{align}
which is an adapted process. This classical trick allows to use It\^o calculus to study the matrix $\Cc_{0,T}$ and relate its evolution to iterated Lie brackets, thus to the H\"ormander's condition. See the general guidelines of Theorem 4.5 in \cite{Hairer11}.

However, in our setting, such an approach is not possible because the infinitesimal flow property (Eq. \eqref{eq:flow1}) takes a more complicated form. It is a priori not obvious to find a reduced Malliavin matrix which is the integral of an adapted process. This is the reason why we perform an analysis only on the segment $[T_h, T]$.
\end{rmk}

\subsection{The evolution of \texorpdfstring{$Z_{F}$}{the Malliavin matrix} and its derivatives}
In this subsection, we fix a functional of time and $\left( X_t, X_{t-h_1}, \dots,X_{t-h_{N-1}} \right)$ denoted by $F:\R^+\times (\R^{d})^N\to \R^d$ and compute the expansion as a rough integral of $\left\{\eta^* J_{t,T}F(t,X) \right\}_{t\in[T_h,T]}$ on the path $\Wc$. For notational simplicity we define 
$$Z_F(t):= \eta^* J_{t,T}F(t,X) \ .$$
The underlying assumption is that $F$ is smooth and all of its derivatives at any order are bounded.

 Recall that the rough path $\left( \Wc(\overline h), \W(\overline h) \right)$ is the lift of $W$ taken with the family of double delays $\overline h$ and defined at subsection \ref{subsection:wellposedness}. We also mentioned at Remark \ref{rmk-extension} that the functionals $V_k$ have discontinuities at time $h_i$. In order to avoid problems due to this lack regularity and to be able to use the Norris' lemma we define 
$$T_{\overline h}:=\sup\{ \overline h \cap [0,T)\}\vee T_h \in (0,T).$$
Note that on the interval the analysis above concerning the Malliavin derivative holds. We also have the following lemma where all the integrands are free of discontinuities on $[T_{\overline h},T]$. 
\begin{lemma}
\label{lem-zF}
For all $T_{\overline h}<s\leq t<T$, we have
\begin{align}\label{eq:zF}
Z_{F}(t)-Z_{F}(s) & = \sum_{k=1}^m \left[ \int_s^t Z_{[F, V_k]}(r) d\Wc_r^{k,0}+ \sum_{i=1}^N  \int_s^t Z_{\pa_i F (\cdot,X) \cdot V_k (\cdot-h_i,X)}(r)d\Wc_r^{k,i} \right] \\
&+\int_s^t Z_{\{\pa_t F(\cdot ,X)+ [F,V_0]+\sum_{i=1}^{N-1} \pa_i F(\cdot,X) \cdot V_0(\cdot-h_i,X)\}}(r) dr\notag
\end{align}
where all the integrands are controlled by $\Wc(\overline h)$ and the integrals are rough integrals of $ \Wc(\overline h)$. 
Additionally, 
\begin{align}
\E\left[\Vert Z_{F}\Vert^p_{\a, [T_{\overline h},T]}\right]<\infty,\mbox{ for all }p\geq 2,
\end{align}
and the remainder $R_{F}$ defined by 
\begin{align*}
R_{F}(s,t):= Z_{F}(t)-Z_{F}(s) -\sum_{k=1}^m  \left[ Z_{[F, V_k]}(s) \Wc_{s,t}^{k,0}+ \sum_{i=1}^{N-1} Z_{\pa_i F (\cdot,X) \cdot V_k (\cdot-h_i,X)}(s) \Wc_{s,t}^{k,i} \right] \ , 
\end{align*}
for $T_{\overline h}<s\leq t\leq T $ and  $s\notin \overline h$  satisfies
$$\E\left[\Vert R_{F}\Vert_{2\a,[T_{\overline h},T]}^p\right]<\infty,  \mbox{ for all } p\geq 2.$$
\end{lemma}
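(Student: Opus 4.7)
The plan is to apply the rough-path Itô/Leibniz formula to the product $J_{t,T}F(t,X)$, then to collect the coefficients of each $d\Wc_t^{k,i}$ and each $dt$ and recognize them as the brackets and semi-brackets of Definition \ref{hormander-spaces}.

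First, I verify that both factors are controlled rough paths on $[T_{\overline h},T]$ with respect to $\Wc(\overline h)$. By Proposition \ref{j-before-T} the flow $t\mapsto J_{t,T}$ is controlled by $\Wc$ with Gubinelli derivative $-J_{t,T}\cdot\pa_0V_k(t,X)$ in direction $(k,0)$ and zero in all other directions; since $\Wc$ is a sub-component of $\Wc(\overline h)$, this makes $J_{\cdot,T}$ controlled by $\Wc(\overline h)$ as well. By the control equations \eqref{eq:control_V_j} and \eqref{eq:control_V_j_prime} of Proposition \ref{proposition:equi}, the path $t\mapsto F(t,X)$ is controlled by $\Wc(\overline h)$ with Gubinelli derivative $\pa_iF(t,X)\cdot V_k(t-h_i,X)$ in direction $(k,i)$. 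The restriction to $[T_{\overline h},T]$ is precisely what avoids the discontinuities of the extensions of $V_k$ flagged in Remark \ref{rmk-extension}.

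Second, since $(\Wc,\W)$ is a geometric rough path (Lemma \ref{lemma:roughness}), the rough-path Itô formula \cite[Theorem 7.6]{FH14} applied to the smooth bilinear map $(j,f)\mapsto \eta^* j f$ on the pair $(J_{\cdot,T},F(\cdot,X))$ collapses to the ordinary Leibniz rule with no second-order correction:
\begin{align*}
d\bigl(\eta^* J_{t,T}F(t,X)\bigr) = \eta^*\,dJ_{t,T}\cdot F(t,X) + \eta^* J_{t,T}\cdot dF(t,X).
\end{align*}
I then substitute the rough-integral representations from Proposition \ref{j-before-T} (for $dJ_{t,T}$) and from equation \eqref{eq:ito_vj_rough} in Proposition \ref{proposition:equi} (for $dF(t,X)$), and group the contributions according to which driver they sit in front of. For each $k\ge 1$, the coefficient of $d\Wc_t^{k,0}$ is $\eta^* J_{t,T}(\pa_0F\cdot V_k - \pa_0V_k\cdot F)(t,X)=Z_{[F,V_k]}(t)$; for each $k\ge 1$, $i\ge 1$, the coefficient of $d\Wc_t^{k,i}$ is $Z_{\pa_iF(\cdot,X)\cdot V_k(\cdot-h_i,X)}(t)$; and the $dt$-drift collects the Markovian bracket $[F,V_0]$ (from the $k=0$ terms of both expansions combining), the time derivative $\pa_tF$, and the delayed semi-brackets $\pa_iF(\cdot,X)\cdot V_0(\cdot-h_i,X)$ for $i\ge 1$, reproducing exactly the drift of \eqref{eq:zF}. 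The $L^p$ estimates on $\|Z_F\|_{\a,[T_{\overline h},T]}$ and on $\|R_F\|_{2\a,[T_{\overline h},T]}$ then follow from the continuity statement of Theorem \ref{thm:continuity_integration}, combined with the moment bounds on $J_{\cdot,T}$ from Proposition \ref{proposition:tangent_flow_sde}, the moment bounds on $X$ as a controlled rough path from Subsection \ref{subsection:wellposedness}, and the boundedness of all derivatives of $F$ and of $V_k$ from Assumption \ref{assumptions:analytic}.

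The main obstacle I expect is not the identification of terms, which is a direct computation once the product rule is in hand, but rather the verification that each integrand on the right-hand side of \eqref{eq:zF} is itself a bona-fide controlled rough path with respect to $\Wc(\overline h)$ so that the rough integrals are meaningful. This is where \eqref{eq:control_V_j_prime} is essential: when applied to the functional $[F,V_k]$ or to $\pa_i F\cdot V_k(\cdot-h_i,X)$, it produces exactly the double-delay structure encoded in $\Wc(\overline h)$, which is the reason the enlarged rough path was introduced in the first place. Once this is done, the moment bounds propagate through Theorem \ref{thm:continuity_integration} and deliver the claimed $L^p$ estimates.
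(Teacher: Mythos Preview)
Your proposal is correct and follows exactly the paper's approach: apply the Leibniz rule to the product $Z_F(t)=\eta^* J_{t,T}F(t,X)$ using the rough expansions of Proposition~\ref{proposition:equi} for $F$ and Proposition~\ref{j-before-T} for $J_{\cdot,T}$, then identify the coefficients. One minor slip: the geometricity of $(\Wc,\W)$ used to kill second-order corrections comes from the construction via \eqref{eq-discrete-first-order}, not from Lemma~\ref{lemma:roughness} (which concerns $\theta$-H\"older roughness).
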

\begin{proof}
Apply the Leibniz rule on the product $Z_F(s) = \eta J_{s,T} F(s)$, and then use the rough integral expansions for $F$ (Proposition \ref{proposition:equi}) and $J_{\cdot, T}$ (Proposition \ref{j-before-T}).
\end{proof}

Note that we can apply Lemma \ref{lemma:roughness} for the rough path $\Wc(\overline h)$ and obtain that this path is $\theta$-Holder rough. We can apply the Norris' Lemma in \cite[Theorem 3.1]{HP13} in the following form. 

\begin{lemma}[Norris lemma]
\label{norris}
There exist constants $p,r>0$ such that for all $(A,A')\in \Dc_{\Wc(\overline h)}^{2\alpha} ([0,T], V)$ and $B$ $\alpha$-H\"older continuous, the path defined for $t,s\in [T_{\overline h},T]$ by 
\begin{align}\label{eq:norris}
Z(t) -Z(s)= \int_s^t B_r dr+\sum_{k=1}^m \sum_{i=0}^{N-1}  \int_s^t A^{k,i}_r d\Wc_r^{k,i}  
\end{align}
satisfies
$$\Vert A\Vert_{\infty,  [T_{\overline h},T]} + \Vert B\Vert_{\infty, [T_{\overline h},T]} \leq C \Rc^p  \Vert Z\Vert_{\infty, [T_{\overline h},T]}^r$$
where the constant $C$ depends only on $T,\{h_i\}$ and $m$ and 
$$ \Rc:=1+L_\theta^{-1}( \Wc(\overline h))+\vertiii{  \Wc(\overline h)}_\a+\Vert A,A'\Vert_{ \Wc(\overline h),2\alpha}+\Vert B\Vert_{\Cc^\alpha,[T_{\overline h},T]}.$$
\end{lemma}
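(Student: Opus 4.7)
The plan is to deduce the statement directly from the abstract Norris-type inequality of Hairer--Pillai, cited as \cite[Theorem 3.1]{HP13}. That result says: whenever a rough path $\mathbf{X}$ is $\theta$-Hölder rough on some scale $\varepsilon_0$, any path $Z$ admitting a decomposition into a $C^1$-drift plus a rough integral against $\mathbf{X}$ with a controlled integrand satisfies a reverse Doob--Meyer estimate of the form $\|A\|_\infty+\|B\|_\infty \leq C \mathcal{R}^p \|Z\|_\infty^r$, with $\mathcal{R}$ a polynomial combination of the inverse roughness modulus, the rough path norm, and the sizes of $A$ and $B$. The content of the present lemma is exactly this, with $\mathbf{X} = \mathcal{W}(\overline h)$.

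I would first verify that Lemma \ref{lemma:roughness} applies verbatim to the enriched delay family $\overline h$, so that $\mathcal{W}(\overline h)$ is $\theta$-Hölder rough on the interval $[T_{\overline h},T]$. The proof of Lemma \ref{lemma:roughness} relies on a single ingredient: a small-ball estimate that only uses the independence of Brownian increments over pairwise disjoint intervals determined by $\Delta(h)=\min_i |h_{i+1}-h_i|$. Because the minimal index set $J$ was chosen so that the elements of $\overline h$ are distinct, the analogous quantity $\Delta(\overline h)>0$ is well-defined, and the very same computation produces the small-ball bound \eqref{eq:small_ball} with $\Delta(\overline h)$ in place of $\Delta(h)$. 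This gives the required roughness.

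Next, I would check that the integrand in the representation of $Z$ fits the controlled-path framework required by Hairer--Pillai. The hypothesis $(A,A')\in \Dc_{\Wc(\overline h)}^{2\alpha}$ together with the $\alpha$-Hölder drift $B$ means exactly that modulo a differentiable term, $Z$ is controlled by $\mathcal{W}(\overline h)$ with Gubinelli derivative $A$; the rough integral $\int A\,d\Wc(\overline h)$ is well defined by Theorem \ref{thm:continuity_integration} applied to the enriched rough path. This places us in the setting of \cite[Theorem 3.1]{HP13}, and invoking that theorem yields an estimate in which the control parameter is exactly
$$\mathcal{R} = 1 + L_\theta^{-1}(\Wc(\overline h)) + \vertiii{\Wc(\overline h)}_\alpha + \|A,A'\|_{\Wc(\overline h),2\alpha} + \|B\|_{\mathcal{C}^\alpha,[T_{\overline h},T]}.$$

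The main obstacle is not conceptual but bookkeeping: restricting to $[T_{\overline h},T]$ so that the integrands involved (which arise from Lemma \ref{lem-zF} and thus depend on the vector fields $V_k$) are free of the discontinuities introduced by the extension of Remark \ref{rmk-extension}, and matching the index conventions from \cite{HP13} (where the driving path is a single vector-valued rough path) with ours (where the driving path is $\Wc(\overline h)$ indexed by pairs $(k,i)$). Once those conventions are aligned and the reference interval is shortened to $[T_{\overline h},T]$, the proof is a direct citation; the constants $p,r$ come out of \cite[Theorem 3.1]{HP13} and depend only on $\alpha$ and $\theta$.
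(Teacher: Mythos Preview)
Your proposal is correct and follows exactly the paper's approach: the paper does not give a standalone proof of this lemma, but simply notes (in the sentence preceding the statement) that Lemma~\ref{lemma:roughness} applies to the enriched rough path $\Wc(\overline h)$ so that it is $\theta$-H\"older rough, and then states the lemma as the form of \cite[Theorem 3.1]{HP13} in this setting. Your write-up is in fact more explicit than the paper, spelling out why the small-ball estimate carries over to $\overline h$ and why the restriction to $[T_{\overline h},T]$ is needed.
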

The choice of $T_{\overline h}$ is mainly motivated by the fact that $\Vert B\Vert_{\Cc^\alpha,I}$ might become infinite if the interval $I$ contains an element of $\overline h$. 

For notational simplicity we define the key quantity for all $n\in \N$
\begin{align}
\Rc_n := & 1+L_\theta^{-1}( \Wc(\overline h))+\vertiii{  \Wc(\overline h)}_\a + 
           \sum_{{F\in \bar \Vc_n}} \left[ \|Z_{F} \|_{\Cc^\alpha,[T_{\overline h},T]} + \|R_{F}\|_{2\alpha,[T_{\overline h},T]} \right]
\end{align}
which satisfies $\E[\Rc^p_n]<\infty$ for all $p>0$ and $n\in\N$. Recall the definition of $j_0$ in the assumption \ref{hormander-condition}. It is the rank such that $\overline{\Vc}_{j_0}$ has the uniform spanning condition.

\begin{lemma}
\label{lem-control-derivative1}
Fix $j_0 \in \N$. There exist deterministic constants $p_0,q_0,C>0$ depending on $j_0$, $T$ and $\{h_i\}$, such that for all $F\in \overline \Vc_{j_0}$, we have 
\begin{align}\label{eq:zfj0}
|Z_F|_{\infty, [ T_{\overline h},T]} \leq  C\Rc_{j_0}^{p_0}\langle \eta, \Mc_{0,T} \eta \rangle^{q_0} .
\end{align}
\end{lemma}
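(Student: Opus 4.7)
The plan is to prove the bound by induction on $j$ for $0 \leq j \leq j_0$, the case $j=j_0$ being the desired statement. Since $\overline{\Vc}_0 \subset \overline{\Vc}_1 \subset \cdots \subset \overline{\Vc}_{j_0}$ (which follows from $\Vc_{j-2}\subset\Vc_{j-1}$ together with the definition of the barred sets), and since all relevant controlled-path and $\Cc^\alpha$ norms are subsumed into $\Rc_{j_0}$ by construction, it suffices to produce at each level $j$ exponents $p_j,q_j>0$ and a constant $C_j$ such that
$$\|Z_F\|_{\infty,[T_{\overline h},T]} \;\leq\; C_j\,\Rc_{j_0}^{p_j}\,\langle\eta,\Mc_{0,T}\eta\rangle^{q_j}, \qquad \forall F\in\overline{\Vc}_j.$$

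\textbf{Base case} ($j=0$). Here $\overline{\Vc}_0=\Vc_0=\{V_1,\dots,V_m\}$. The Malliavin-matrix lower bound recalled just before Remark \ref{rmk:tangent_markov} gives, for each $k$,
$$\langle\eta,\Mc_{0,T}\eta\rangle \;\geq\; \sum_{k=1}^m\int_{T_h}^T |Z_{V_k}(s)|^2\,ds \;\geq\; \int_{T_{\overline h}}^T |Z_{V_k}(s)|^2\,ds,$$
using $T_h\leq T_{\overline h}$. Combined with the standard interpolation inequality
$$\|f\|_{\infty,[a,b]} \;\leq\; C_{b-a}\,\|f\|_{\alpha,[a,b]}^{1/(2\alpha+1)}\,\|f\|_{L^2[a,b]}^{2\alpha/(2\alpha+1)}$$
applied to $f=Z_{V_k}$ on $[T_{\overline h},T]$, and using $\|Z_{V_k}\|_{\Cc^\alpha,[T_{\overline h},T]}\leq\Rc_{j_0}$ by definition of $\Rc_{j_0}$, this yields the bound with exponents $p_0=1/(2\alpha+1)$ and $q_0=\alpha/(2\alpha+1)$.

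\textbf{Inductive step.} Assume the bound holds at level $j-1$, and fix $F\in\overline{\Vc}_j$. Either $F\in\overline{\Vc}_{j-1}$ (and the hypothesis applies), or $F$ has one of the following forms built from some $G\in\Vc_{j-1}\subset\overline{\Vc}_{j-1}$: (i) a bracket $[G,V_k]$ with $k\geq 1$; (ii) a semi-bracket $\partial_i G\cdot V_k(\cdot-h_i,\cdot)$ with $k\geq 1$ and $i\geq 1$; or (iii) the drift-plus-time sum $\partial_t G+[G,V_0]+\sum_{i=1}^{N-1}\partial_i G\cdot V_0(\cdot-h_i,\cdot)$. In each of these cases, Lemma \ref{lem-zF} identifies $Z_F$ as a coefficient in the controlled-rough-path expansion \eqref{eq:zF} of $Z_G$: as the coefficient of a $d\Wc^{k,i}$ term in (i)--(ii), and as the Lebesgue drift in (iii). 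Norris' Lemma \ref{norris} then delivers
$$\|Z_F\|_{\infty,[T_{\overline h},T]} \;\leq\; C\,\Rc_{j_0}^{p'}\,\|Z_G\|_{\infty,[T_{\overline h},T]}^{r}$$
for universal $p',r>0$, and the induction hypothesis applied to $Z_G$ closes the step with $p_j=p'+r\,p_{j-1}$ and $q_j=r\,q_{j-1}$.

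\textbf{Main obstacle.} The crux is to verify the applicability of Norris' Lemma to the equation of $Z_G$: the vector of integrands $A=(Z_{[G,V_k]},Z_{\partial_i G\cdot V_k(\cdot-h_i,\cdot)})$ must form a controlled rough path relative to $\Wc(\overline h)$ with finite $\|A,A'\|_{\Wc(\overline h),2\alpha}$, and the drift $B$ must be $\alpha$-H\"older with finite $\|B\|_{\Cc^\alpha}$. Both regularities are exactly what Lemma \ref{lem-zF} supplies---through the explicit control equation \eqref{eq:zF}, the identification of the Gubinelli derivatives, and the moment bounds on the remainders---so that these norms are dominated by $\Rc_{j_0}$ by construction of $\Rc_{j_0}$. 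The restriction to $[T_{\overline h},T]$ is essential here: it excises the points where the vector fields $V_k$, extended by zero on $(-\infty,0)$ per Remark \ref{rmk-extension}, become discontinuous, which would otherwise destroy the $\Cc^\alpha$-regularity of $B$ and thus prevent Norris' Lemma from being invoked.
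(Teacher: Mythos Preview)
Your proof is correct and follows essentially the same route as the paper: induction on the level $j$, with the interpolation inequality between $L^2$ and $\Cc^\alpha$ norms furnishing the base case $j=0$, and Norris' Lemma (applied to the rough expansion \eqref{eq:zF} of $Z_G$ for $G\in\Vc_{j-1}$) driving the inductive step. Your ``main obstacle'' paragraph correctly identifies the role of Lemma~\ref{lem-zF} in supplying the controlled-path structure needed to invoke Norris, and your explanation of why one restricts to $[T_{\overline h},T]$ is spot on; the paper's own proof is in fact somewhat terser on these points.
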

\begin{proof}
We reason by induction over the index $j_0$.  

For initial step $j_0 = 0$, we start by the fact that there exists a constant $C_{h,T}$ such that for all $j=1\ldots m$
\begin{align}
\label{eq-holder-z1}
\|Z_{V_j} \|_{\infty, [T_{\overline h},T]}  \leq C_{h,T}   \langle \eta, \Mc_{0,T} \eta \rangle^{\frac{\alpha}{2\alpha+1}} \|Z_{V_j} \|^{\frac{1}{2\alpha+1}}_{\alpha,[T_{\overline h},T]} .
\end{align}
To prove that fact, simply repeat the interpolation inequality argument as in the proof of Lemma 5 in \cite{HP13} on $[T_{\overline h},T]$ and obtain that 
\begin{align*}
\sup_{s\in [T_{\overline h},T]} \left| Z_{V_j} (s) \right| 
\leq C_{h,T}  \Vert Z_{V_j} \Vert^{\frac{2\alpha}{2\alpha+1}}_{L^2([T_{\overline h},T])} \|Z_{V_j} \|^{\frac{1}{2\alpha+1}}_{\alpha,[T_{\overline h},T]}
\end{align*}
We finish the proof of Eq. \eqref{eq-holder-z1} with the obvious inequalities
$$\Vert Z_{V_j} \Vert^2_{L^2([T_{\overline h},T])}\leq  \langle \eta, \Mc_{0,T} \eta \rangle.$$
Finally, Eq. \eqref{eq-holder-z1} implies \eqref{eq:zfj0} because $\|Z_{V_j} \|_{\alpha,[T_{\overline h},T]} \| \leq \Rc_0$.

Now for the induction step, we assume that the result holds true for $j_0$. Consider $F \in \overline \Vc_{j_0+1}$. Due to the definition of the brackets at \eqref{dupire-bracket}, there exists $G\in \Vc_{j_0}$ such that $G$ is a function of the form $G:\R^+\times (\R^{d})^N\to \R^d$ and $Z_F$ is a Gubinelli derivative of $Z_G$ or $Z_F$ is the absolutely continuous part in the decomposition of $Z_G$. We apply the Norris Lemma \ref{norris} to Equation \eqref{eq:zF}  for $Z_G$ on  $[T_{\overline h},T]$ to have the existence of $C_1$, $p_1$ and $q_1$ such that 
$$|Z_F|_{\infty, [ T_{\overline h},T]} \leq  C_1 \Rc_{j_0}^{p_1} \sup_{G \in \Vc_{j_0}}\ |Z_G|_{\infty, [ T_{\overline h},T]}^{q_1} \ .$$
This implies by induction hypothesis that there are $p_0$ and $q_0$ such that
$$|Z_F|_{\infty, [ T_{\overline h},T]} \leq  C_1\Rc_{j_0+1}^{p_0}\langle \eta, \Mc_{0,T} \eta \rangle^{q_0}. $$
The fact that this inequality is in particular true for $F\in \Vc_{j_0+1}$ is what we need to iterate. 
\end{proof}
\subsection{Proof of Theorem \ref{thm:hormander}}\label{subsection-proof-hormander}
We first prove the theorem under the Assumption \ref{hormander-condition}, condition (1) or (2). It is classical that $ \E\left[| \Mc_{0,T}^{-1}|^p\right]<\infty$ for all $p\geq 2$ is a sufficient condition for the existence of smooth densities for $X_T$(see for example \cite[Theorem 2.1.4]{nualart1988stochastic}).  As shown in \cite[Lemma 4.7]{Hairer11}, this latter statement is itself implied by the existence for all $p \in \N$ of a constant $C_p$ such that:
\begin{align}
\label{eq:sup_Malliavin}
\sup_{|\eta| = 1} \P\left( \langle \eta, \Mc_{0,T} \eta \rangle \leq \varepsilon \right) \leq C_p \varepsilon^p
\end{align}
We now use the inequality \eqref{eq:zfj0} at time $T$ and obtain 
\begin{align*}
\inf_{|\eta|=1}\inf_{\xb\in \O} \sup_{F\in \overline \Vc_{j_0}} |\eta^* F(T,\xb)|\leq C\Rc_{j_0}^{p_0}\langle \eta, \Mc_{0,T} \eta \rangle^{r_0}. 
\end{align*}
Due to the H\"ormander condition in assumption \ref{hormander-condition} the left hand side is a positive deterministic constant that we denote $\delta>0$. We obtain 
$$\langle \eta, \Mc_{0,T} \eta \rangle\geq\frac{\delta^{1/r_0}}{(C\Rc_{j_0}^{p_0})^{1/r_0}}.$$
Using the integrability of $\Rc_{j_0}$ we easily obtain \eqref{eq:sup_Malliavin}. 

\begin{rmk}[Special case of bounded diffusion]
Note that the classical H\"ormander theorem requires a pointwise spanning condition. This is due to the fact that in the Markovian case the derivative of the flow $J_{t,T}$ is invertible for all $t\in[0,T]$ and the spanning condition is only required at the initial point of the diffusion. We do not have any hope of obtaining this invertibility. Thus we are only able to reason at time $T$ and check a spanning condition at the random variable $(T,X_T)$ via our uniform condition \ref{hormander-condition}. 

Note that if we know a priori that the process is bounded we can still have a more pointwise statement of the H\"ormander condition.  In order to state this result we formulated condition (3) of Assumption \ref{hormander-condition}.
\end{rmk}

We now prove the theorem under condition (3). Denote $C$ the constant bounding $X_t$.
$j_0$ is finite and the functions $F\in \overline \Vc_{j_0}$ are continuous on a finite dimensional space. Thus there exists $\eta^*$ with $|\eta^*|=1$ and $\xb^*\in \Cc^\alpha$ with such that 
\beaa
\inf_{|\eta|=1}\inf_{\substack{\xb\in \O\\|\xb|_\infty \leq C}} \sup_{F\in \overline \Vc_{j_0}} |\eta F(T,\xb)| = \sup_{F\in \overline \Vc_{j_0}} |\eta^* F(T,\xb^*)| 
\eeaa
We now use the H\"ormander condition \ref{pointwise-hormander} at the point $\xb^*$ to obtain that the existence of $F^*\in \overline \Vc_{j_0}$ such that $ |\eta^* F^*(T,\xb^*)| =\d>0$. Similarly to the beginning of this section we obtain that  
$$\langle \eta, \Mc_{0,T} \eta \rangle\geq\frac{\delta  ^{1/r_0}}{(C\Rc_{j_0}^{p_0})^{1/r_0}}.$$
and finish the proof.

\bibliographystyle{halpha}
\bibliography{RegularityHormander}

\end{document}